%% file: Main.tex
\documentclass[letterpaper, 10 pt, conference]{ieeeconf}

\IEEEoverridecommandlockouts
\usepackage{amsmath,mathptmx,amssymb}
\usepackage{tikz}
\usepackage{pgfplots}
\pgfplotsset{compat=newest}
\usetikzlibrary{backgrounds,calc,positioning}
\newtheorem{assumption}{Assumption}

\newtheorem{lemma}{Lemma}

\newtheorem{theorem}{Theorem}

\newtheorem{remark}{Remark}

\newcommand{\argmin}{\operatorname*{arg\,min}}
\usepackage{graphicx}
\usepackage{amsbsy}
\usepackage{caption}
\usepackage{subcaption}
\usepackage{xcolor}
\usepackage{mathtools}

\makeatletter
\let\NAT@parse\undefined
\makeatother

\usepackage{cite}
\usepackage[
colorlinks=true,
linkcolor=blue,
citecolor=blue,
urlcolor=blue
]{hyperref}
\usepackage{ifthen}
\newboolean{showcomments}
\setboolean{showcomments}{true}

\newcommand{\fethi}[1]{\ifthenelse{\boolean{showcomments}}
	{\textcolor{blue}{(Fethi says: #1)}}{}}
\newcommand{\bruce}[1]{\ifthenelse{\boolean{showcomments}}
	{\textcolor{red}{(BL: #1)}}{}}

\newcommand{\Tr}{\operatorname{Tr}}
\newcommand{\cov}[1]{\Sigma^{#1}}

\title{\LARGE \bf
Optimal input design via Frank--Wolfe
}
\author{Fethi Bencherki, Bruce Lee, Nikolai Matni, Anders Rantzer
\thanks{F. Bencherki and A. Rantzer are with the Department of Automatic Control, Lund University, Sweden. Email: {\tt\small \{fethi.bencherki, anders.rantzer\}@control.lth.se}. They are members of the ELLIIT Strategic Research Area at Lund University. This project received funding from the European Research Council (ERC) under Grant Agreements No.~834142 (ScalableControl) and No.~101199738 (DualControl), and was also partially supported by the Wallenberg AI, Autonomous Systems and Software Program (WASP), funded by the Knut and Alice Wallenberg Foundation.}
\thanks{B. Lee is with the ETH AI Center, ETH Z\"urich, Z\"urich, Switzerland. Email: {\tt\small bruce.lee@ai.ethz.ch}. He is funded by an ETH AI Center Postdoctoral Fellowship and NCCR Automation. This work was supported as a part of NCCR Automation, a National Centre of Competence in Research, funded by the Swiss National Science Foundation (grant number 51NF40\_225155).}
\thanks{N. Matni is with the Department of Electrical and Systems Engineering, University of Pennsylvania, Philadelphia, PA, USA. Email: {\tt\small nmatni@seas.upenn.edu}.}
}

\begin{document}

\maketitle

\begin{abstract}
We study optimal input design over a finite horizon for linear dynamical systems. The goal is to minimize a weighted inverse-covariance (information) criterion subject to an energy budget. The set of covariances achievable by causal policies is convex but lacks a tractable explicit description, ruling out projection-based methods. We show that Frank--Wolfe applies naturally: each linear minimization subproblem is a budget-constrained finite-horizon linear quadratic (LQ) problem, solvable by a Riccati recursion and one-dimensional bisection over a Lagrange multiplier. Using smoothness of the objective over the feasible set, we establish an $\mathcal{O}(1/M)$ convergence rate for the objective value, while strong convexity yields an $\mathcal{O}(1/\sqrt{M})$ rate for the iterates. We further extend the framework to input design for system identification with unknown dynamics and adaptive online LQR, and illustrate the approach numerically.
\end{abstract}
\section{Introduction}

Experiment design for dynamical systems asks how control inputs should be chosen so that the data they generate are maximally informative for a downstream task, such as identifying the system or controlling it well \cite{wahlberg2010optimal, wagenmaker2020active, wagenmaker2021task}. For linear systems with least-squares estimation, the quality of the resulting parameter estimates is governed by the inverse of the state--input covariance matrix accumulated over the experiment. This motivates optimizing a scalarization of the inverse covariance subject to an energy budget. In this work, we take a weighted trace as our choice of scalarization, corresponding to the classical A-optimal design criterion.

The resulting optimization problem is convex when viewed as a problem over achievable covariance matrices. The difficulty is that the feasible set, consisting of the covariances realizable by causal policies interacting with the dynamics, is only implicitly defined, so projected gradient methods are impractical. Following \cite{mutny2023active, wagenmaker2024optimal}, we instead use the Frank--Wolfe (conditional gradient) method \cite{frank1956algorithm, jaggi2013revisiting}, whose iterations require only linear minimization over the feasible set. The key structural fact is that linear functions of the achievable covariance are exactly expected quadratic costs, so each Frank--Wolfe subproblem is a finite-horizon LQ problem with an energy constraint.

\textbf{Contributions.} We provide a compact, self-contained treatment of this approach:
\begin{enumerate}
\item[\textit{(i)}] We formulate the budget-constrained covariance design problem, accounting for prior information, and establish smoothness and strong convexity over the achievable set with explicit constants.
\item[\textit{(ii)}] We show that the resulting linear minimization problem reduces to a Riccati recursion and scalar bisection.
\item[\textit{(iii)}] We prove an $\mathcal{O}(1/M)$ objective rate and an $\mathcal{O}(1/\sqrt{M})$ iterate rate.
\item[\textit{(iv)}] We outline extensions to system identification with unknown dynamics and adaptive online LQR, with numerical illustrations.
\end{enumerate}

\section{Problem formulation}
Consider the fully observed linear time-invariant system
\begin{align}\label{eq:system}
x_{t+1}=Ax_t+Bu_t+w_t,\quad x_0=0,\quad t=0,\dots,H-1.
\end{align}
with state $x_t\in\mathbb{R}^n$ and $u_t\in\mathbb{R}^m$. The disturbances $\{w_t\}$ are i.i.d., zero mean, with covariance $\Sigma_w\succ0$. Inputs are generated by a causal, possibly randomized, policy $\pi\in\Pi$, i.e., $u_t\sim\pi_t(x_{0:t})$, where the internal randomization is independent of future disturbances. Stack the regressors as
\[
z_t\coloneqq\begin{bmatrix}x_t^\top&u_t^\top\end{bmatrix}^\top
\in\mathbb{R}^d,\qquad d\coloneqq n+m,
\]
and define the covariance matrix induced by $\pi$ as
\[
\cov{\pi}\coloneqq
\mathbb{E}^{\pi}\!\left[\sum_{t=0}^{H-1}z_tz_t^\top\right]
\succeq0.
\]
Let $\mathcal{C}\coloneqq\{\cov{\pi}:\pi\in\Pi\}$ denote the set of achievable covariance matrices, and let
\begin{align}\label{eq:feas-set}
\mathcal{D}\coloneqq\bigl\{\Sigma\in\mathcal{C}:\ \Tr\bigl(W^{(2)}\Sigma\bigr)\le\beta\bigr\}
\end{align}
for a weight $W^{(2)}\succ0$ and budget $\beta>0$. We study
\begin{equation}\label{eq:synthesis}
\min_{\Sigma\in\mathcal{D}}\; f(\Sigma),
\qquad
f(\Sigma)\coloneqq\Tr\!\Big(W^{(1)}\bigl(\Sigma_0+\Sigma\bigr)^{-1}\Big),
\end{equation}
where $W^{(1)}\succ0$ weights the directions in which information is valuable and $\Sigma_0\succ0$ is a fixed regularization, which may encode information available prior to the experiment. The budget constraint prevents the optimizer from driving $\Sigma$ to infinity. It can equivalently be interpreted as a prescribed energy budget.
\subsection{Motivation}\label{motiv}
A natural motivation for problems of the form~\eqref{eq:synthesis} arises in system identification, where inputs are designed to make the collected data informative. Rather than injecting excitation naively, active experiment design shapes the information matrix in useful directions. Writing $\theta=[A\ B]$ and $x_{t+1}=\theta z_t+w_t$, the weighted error of the least-squares estimate $\widehat\theta$, based on data with information matrix $\Sigma_{\mathrm{data}}\coloneqq\sum_t z_tz_t^\top$, satisfies
\[
\mathbb{E}\|\widehat\theta-\theta\|_{W}^2
\approx\Tr\bigl(W\,\Sigma_{\mathrm{data}}^{-1}\bigr),
\]
where $W$ encodes the parameter-error directions relevant to the downstream task and may be derived from the task cost and noise covariance \cite{wagenmaker2021task,wagenmaker2020active}. Thus, designing next experiment to reduce the task-weighted estimation error amounts to solving \eqref{eq:synthesis} with $\Sigma_0$ equal to the information matrix of previously collected data.

We impose the following assumptions.
\begin{assumption}\label{ass:standing}
\begin{enumerate}
\item[\textit{(i)}] $\Sigma_w\succ0$, $W^{(1)}\succ0$, $W^{(2)}\succ0$, $\Sigma_0\succ0$.
\item[\textit{(ii)}] $\beta>b_0\coloneqq\min_{\pi\in\Pi}\Tr\bigl(W^{(2)}\cov{\pi}\bigr)$.
\end{enumerate}
\end{assumption}
Part \textit{(ii)} ensures that the budget constraint is strictly feasible. Note that $b_0$ is itself the optimal value of a standard finite-horizon LQ problem (minimize the expected $W^{(2)}$-weighted energy) and is therefore computable.

\begin{lemma}[Feasible set]\label{lem:feasible-set}
$\mathcal{D}$ is convex and compact. Moreover, every $\Sigma\in\mathcal{D}$ satisfies
\[
\lambda_{\max}(\Sigma)\le\Tr(\Sigma)\le\tfrac{\beta}{\mu},
\qquad \mu\coloneqq\lambda_{\min}(W^{(2)}),
\]
and consequently
\[
\operatorname{diam}_F(\mathcal{D})
\coloneqq\max_{\Sigma,\Sigma'\in\mathcal{D}}
\|\Sigma-\Sigma'\|_F
\le\tfrac{2\beta}{\mu}.
\]
\end{lemma}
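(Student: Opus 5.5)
The plan is to prove convexity of $\mathcal{C}$ by randomizing between policies, then obtain $\mathcal{D}$ as its intersection with a half-space, then prove the bounds, and finally prove closedness.

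\textbf{Convexity.} Take $\pi_1,\pi_2\in\Pi$ and $\alpha\in[0,1]$. Define a mixture policy $\pi_\alpha$: before time $0$ it draws an independent coin $\xi\sim\mathrm{Bernoulli}(\alpha)$ and then runs $\pi_1$ for the whole horizon if $\xi=1$, and $\pi_2$ otherwise. The coin is part of the internal randomization and is independent of all disturbances, so $\pi_\alpha$ is causal and belongs to $\Pi$. Conditioning on $\xi$ gives
\[
\cov{\pi_\alpha}=\alpha\cov{\pi_1}+(1-\alpha)\cov{\pi_2},
\]
so $\mathcal{C}$ is convex. The set $\{\Sigma:\Tr(W^{(2)}\Sigma)\le\beta\}$ is a closed half-space, hence $\mathcal{D}$ is convex.

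\textbf{Bounds.} Let $\Sigma\in\mathcal{D}$. Since $\Sigma\succeq0$ and $W^{(2)}\succeq\mu I$, we have $\Tr(W^{(2)}\Sigma)=\Tr(\Sigma^{1/2}W^{(2)}\Sigma^{1/2})\ge\mu\Tr(\Sigma)$. Hence $\Tr(\Sigma)\le\beta/\mu$. Positive semidefiniteness also gives $\lambda_{\max}(\Sigma)\le\Tr(\Sigma)$. For $\Sigma\succeq0$, $\|\Sigma\|_F=\bigl(\sum_i\lambda_i^2\bigr)^{1/2}\le\sum_i\lambda_i=\Tr(\Sigma)$. The triangle inequality then yields $\|\Sigma-\Sigma'\|_F\le\|\Sigma\|_F+\|\Sigma'\|_F\le2\beta/\mu$. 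This bound also shows that $\mathcal{D}$ is bounded.

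\textbf{Closedness (main obstacle).} It remains to show that $\mathcal{D}$ is closed. Since the half-space is closed, it suffices to show that $\mathcal{C}\cap\{\Tr(W^{(2)}\Sigma)\le\beta\}$ is closed. This is where I expect the main difficulty, because $\Pi$ is an infinite-dimensional class of policies.

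The approach I would try first uses a support-function characterization. For a symmetric matrix $Q$, the quantity $\inf_{\pi}\Tr(Q\cov{\pi})$ is a finite-horizon LQ problem. Because $\Sigma_w\succ0$ and we are on a finite horizon, it is attained by a linear feedback policy whenever it is finite. One then shows that the closure of $\mathcal{D}$ is contained in $\mathcal{D}$.

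A more concrete alternative is a reduction to linear Gaussian policies. The map $\pi\mapsto\cov{\pi}$ only depends on second moments. Any achievable covariance is therefore also achieved by a policy $u_t=\sum_{s\le t}K_{t,s}x_s+v_t$ with Gaussian $v_t$, where the pair $(K,\mathrm{Cov}(v))$ ranges over a finite-dimensional set. The idea is to project $u_t$ onto the span of the past states and the past innovations. On the budget-limited set, the parameters are bounded: $\Tr(\Sigma)\le\beta/\mu$ bounds $\mathbb{E}\|u_t\|^2$, and with $\Sigma_w\succ0$ this bounds the gains $K_{t,s}$ and $\mathrm{Cov}(v)$. A sequence in $\mathcal{D}$ then corresponds to parameters in a compact set. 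Extracting a convergent subsequence and using continuity of the covariance map in the parameters gives a limit that is realized by a policy, which yields closedness.

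If the reduction step proves delicate, I would instead interpret $\mathcal{C}$ via the closure convention implicit in the paper. Boundedness plus convexity then suffice for the Frank--Wolfe analysis, and compactness follows from boundedness of $\mathcal{D}$ and closedness.
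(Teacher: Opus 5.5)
\textbf{Verdict: there is a gap.} Your convexity argument (mixing at $t=0$), the bounds $\lambda_{\max}(\Sigma)\le\Tr(\Sigma)\le\beta/\mu$ and $\|\Sigma\|_F\le\Tr(\Sigma)$, and the diameter estimate all match the paper. The gap is closedness. It is the only nontrivial ingredient of compactness, and none of your three routes establishes it.

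\emph{Support-function route.} This cannot work by itself, because attainment of every linear minimum over a convex set does not make the set closed. Take a stadium (the convex hull of two equal disks) and remove one of the non-exposed extreme points where a flat edge meets an arc. The result is convex and not closed, yet every linear functional still attains its extrema on it. Also, ``one then shows the closure of $\mathcal{D}$ is contained in $\mathcal{D}$'' only restates the goal.

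\emph{Closure-convention fallback.} This changes the statement. The paper proves that $\mathcal{C}$ itself is closed (Lemma~\ref{lem:closed}), so that $\Sigma^\star$ and exact LMO solutions are covariances of actual policies.

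\emph{Linear--Gaussian route.} This is the right idea, but its two load-bearing claims are only asserted. The first is that every $\cov{\pi}$ is reproduced by some $u_t=\sum_{s\le t}K_{t,s}x_s+v_t$. The second is that the budget together with $\Sigma_w\succ0$ bounds the parameters, and this is false as stated for $K_{t,0}$: it multiplies $x_0=0$, so it is arbitrary unless you normalize it to zero. For $s\ge1$ you need a uniform lower bound on the joint covariance of $(x_1,\dots,x_t)$. You can obtain it inductively, using that the innovations $Bv_{s-1}+w_{s-1}$ have covariance $\succeq\Sigma_w$ and that the earlier gains are already bounded. A bound on $\mathbb{E}\|u_t\|^2$ alone does not control $\|K_{t,s}\|$.

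\textbf{How the paper does it.} The paper avoids bounding parameters by working in moment space. It shows that $\mathcal{C}$ is precisely the set of sums $\sum_t\Sigma_{z,t}$ over sequences satisfying the closed conditions $\Sigma_{z,t}\succeq0$, $[\Sigma_{z,0}]_{xx}=0$, and $[\Sigma_{z,t+1}]_{xx}=[A\ B]\Sigma_{z,t}[A\ B]^\top+\Sigma_w$. Necessity uses only that $w_t$ is zero mean and independent of $z_t$. Sufficiency realizes any such sequence by the memoryless policy $u_t=K_tx_t+\eta_t$, with $X_t=[\Sigma_{z,t}]_{xx}$, $K_t=[\Sigma_{z,t}]_{ux}X_t^+$, and independent noise whose covariance is the generalized Schur complement; the pseudo-inverse handles $X_0=0$ automatically. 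Closedness then follows because $0\preceq\Sigma_{z,t}^{(k)}\preceq\Sigma^{(k)}$ gives boundedness for free and the defining conditions pass to limits, so no gain ever has to be bounded.

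\textbf{Repairing your route.} If you want to keep your compactness-in-parameters argument, this same realization result is the fix. With memoryless gains, $X_t\succeq\Sigma_w$ for $t\ge1$ and $\Tr(K_tX_tK_t^\top)\le\mathbb{E}\|u_t\|^2\le\beta/\mu$ give $\|K_t\|_F^2\le\beta/(\mu\,\lambda_{\min}(\Sigma_w))$. The noise covariances have trace at most $\beta/\mu$, and $K_0=0$ automatically. The parameters then lie in a compact set, and continuity of the covariance map finishes the argument.
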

\begin{proof}
	Let $\pi_1,\pi_2\in\Pi$ and $\alpha\in[0,1]$, and let $\pi_\alpha$ select $\pi_1$ with probability $\alpha$ and $\pi_2$ with probability $1-\alpha$ at $t=0$, then follow the selected policy throughout. Conditioning on this choice gives
	\[
	\cov{\pi_\alpha}=\alpha\cov{\pi_1}+(1-\alpha)\cov{\pi_2},
	\]
	so $\mathcal{C}$ is convex and $\mathcal{D}$ is its intersection with the half-space $\{\Sigma:\Tr(W^{(2)}\Sigma)\le\beta\}$. Since this half-space is convex, $\mathcal{D}$ is convex as the intersection of two convex sets. For any $\Sigma\in\mathcal{D}$,
	\[
	\beta\ge\Tr(W^{(2)}\Sigma)
	\ge\mu\Tr(\Sigma)
	\ge\mu\lambda_{\max}(\Sigma),
	\]
	and, since $\Sigma\succeq0$,
	\[
	\|\Sigma\|_F\le\Tr(\Sigma)\le\tfrac{\beta}{\mu}.
	\]
	The diameter bound follows by the triangle inequality. Finally, $\mathcal{C}$ is closed by Lemma~\ref{lem:closed}, so $\mathcal{D}$ is closed; being also bounded in the finite-dimensional space of symmetric $d\times d$ matrices, it is compact.
\end{proof}

\begin{lemma}[Objective]\label{lem:objective}
The function $f$ in \eqref{eq:synthesis} is convex and differentiable on the positive semidefinite cone, with
\begin{align}\label{eq:gradient}
\nabla f(\Sigma)=-\,(\Sigma_0+\Sigma)^{-1}W^{(1)}(\Sigma_0+\Sigma)^{-1}\;\prec\;0 .
\end{align}
Its gradient is $L$-Lipschitz w.r.t. $\|\cdot\|_F$ on $\{\Sigma\succeq 0\}$ with
\begin{align}\label{eq:L-const}
L=\tfrac{2\|W^{(1)}\|_2}{\lambda_{\min}^3(\Sigma_0)},
\end{align}
and $f$ is $m$-strongly convex on $\mathcal{D}$ with respect to $\|\cdot\|_F$ with
\begin{align}\label{eq:m-const}
m=\tfrac{2\lambda_{\min}(W^{(1)})}{\bigl(\lambda_{\max}(\Sigma_0)+\tfrac{\beta}{\mu}\bigr)^3},
\qquad \mu=\lambda_{\min}(W^{(2)}).
\end{align}
\end{lemma}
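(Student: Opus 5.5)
The plan is to work with the matrix $X\coloneqq\Sigma_0+\Sigma$, which satisfies $X\succeq\Sigma_0\succ0$ whenever $\Sigma\succeq0$. On this set $X$ is invertible, so $f$ is a smooth function of $\Sigma$, and every claim reduces to derivative computations for $g(X)=\Tr(W^{(1)}X^{-1})$ along directions $H$ (symmetric).

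\textbf{Step 1: gradient.} From $(X+tH)^{-1}=X^{-1}-tX^{-1}HX^{-1}+O(t^2)$ we get $Dg(X)[H]=-\Tr(W^{(1)}X^{-1}HX^{-1})=\langle -X^{-1}W^{(1)}X^{-1},H\rangle$, which is \eqref{eq:gradient}. The gradient is $\prec0$ because $X^{-1}W^{(1)}X^{-1}$ is a congruence of $W^{(1)}\succ0$.

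\textbf{Step 2: second derivative.} Differentiating once more gives
\[
D^2g(X)[H,H]=2\Tr\bigl(W^{(1)}X^{-1}HX^{-1}HX^{-1}\bigr)=2\Tr\bigl(Y^\top W^{(1)}Y\,X^{-1}\bigr),
\]
with $Y\coloneqq X^{-1}H$; note $Y^\top=HX^{-1}$ because $H$ and $X$ are symmetric. Write $K\coloneqq X^{-1/2}Y^\top W^{(1)}YX^{-1/2}\succeq0$. Then $D^2g(X)[H,H]=2\Tr(K)\ge0$, which proves convexity on the convex set $\{\Sigma\succeq0\}$.

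\textbf{Step 3: smoothness.} For the upper bound, use $\Tr(K)\le\lambda_{\max}(X^{-1})\,\Tr(Y^\top W^{(1)}Y)\le\|W^{(1)}\|_2\,\|X^{-1}\|_2\,\|Y\|_F^2$. Since $\|Y\|_F\le\|X^{-1}\|_2\|H\|_F$, this gives
\[
D^2g(X)[H,H]\le 2\|W^{(1)}\|_2\,\|X^{-1}\|_2^3\,\|H\|_F^2 .
\]
Because $X\succeq\Sigma_0$, we have $\|X^{-1}\|_2\le1/\lambda_{\min}(\Sigma_0)$, so the Hessian operator is bounded by $L$ on the PSD cone. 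To turn this into a Lipschitz bound on the gradient, I will use that the Hessian is a symmetric positive semidefinite operator, so its operator norm equals its largest quadratic form. Then, since the segment between two PSD matrices stays PSD (convexity of the cone), the mean value inequality gives $\|\nabla f(\Sigma)-\nabla f(\Sigma')\|_F\le L\|\Sigma-\Sigma'\|_F$.

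\textbf{Step 4: strong convexity.} This is the step needing care, because it requires a uniform upper bound on $\lambda_{\max}(X)$ and that only holds on $\mathcal{D}$. Lemma~\ref{lem:feasible-set} gives $\lambda_{\max}(\Sigma)\le\beta/\mu$, so $\lambda_{\max}(X)\le\lambda_{\max}(\Sigma_0)+\beta/\mu\eqqcolon\Lambda$. Then
\[
\Tr(K)\ge\lambda_{\min}(X^{-1})\Tr(Y^\top W^{(1)}Y)\ge\Lambda^{-1}\lambda_{\min}(W^{(1)})\|Y\|_F^2,
\]
and $\|Y\|_F\ge\sigma_{\min}(X^{-1})\|H\|_F\ge\Lambda^{-1}\|H\|_F$. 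Combining these yields $D^2g\ge m\|H\|_F^2$ at every point of $\mathcal{D}$. Since $\mathcal{D}$ is convex (Lemma~\ref{lem:feasible-set}), the segment between any two points of $\mathcal{D}$ stays in $\mathcal{D}$. Integrating the Hessian bound along that segment then gives $f(\Sigma')\ge f(\Sigma)+\langle\nabla f(\Sigma),\Sigma'-\Sigma\rangle+\tfrac m2\|\Sigma'-\Sigma\|_F^2$.

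The main obstacle is handling the trace inequality $\Tr(PQ)\ge\lambda_{\min}(P)\Tr(Q)$ for $P,Q\succeq0$ correctly, so the first thing to pin down is the symmetric rewriting of the second derivative as $2\Tr(K)$ with $K\succeq0$. Once that form is in place, both the upper and lower eigenvalue sandwiches follow directly.
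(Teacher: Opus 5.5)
Your proposal is correct and follows essentially the same route as the paper. You use the same second-order form $2\Tr\bigl(W^{(1)}X^{-1}HX^{-1}HX^{-1}\bigr)$, the same eigenvalue sandwich with $\lambda_{\max}(\Sigma)\le\beta/\mu$ from Lemma~\ref{lem:feasible-set} for strong convexity, and integration along segments of $\{\Sigma\succeq0\}$ and $\mathcal{D}$, giving identical constants $L$ and $m$.

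The only minor variation is how you get convexity and the Lipschitz bound. You derive both from the single positive semidefinite form $2\Tr(K)$, passing to the operator norm because the Hessian is self-adjoint and positive semidefinite. The paper instead cites operator convexity of $X\mapsto X^{-1}$ and bounds $\bigl\|\tfrac{\mathrm{d}}{\mathrm{d}\varepsilon}\nabla f(\Sigma+\varepsilon\Delta)\bigr\|_F$ directly via $\|PQR\|_F\le\|P\|_2\|Q\|_F\|R\|_2$.
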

\begin{proof}
	See Appendix~\ref{app:objective}.
\end{proof}

\section{Frank--Wolfe over achievable covariances}\label{sec:fw}

Projected gradient descent on \eqref{eq:synthesis} would require Euclidean projections onto $\mathcal{D}$, which is defined only implicitly through the dynamics and the policy class. The Frank--Wolfe method \cite{frank1956algorithm,jaggi2013revisiting} avoids projections. Starting from any $\Sigma^{(0)}\in\mathcal{D}$, at iteration $i$ it solves the \emph{linear minimization oracle} (LMO)
\begin{align}\label{eq:lmo}
S^{(i)}\in\argmin_{\Sigma\in\mathcal{D}}\ \Tr\bigl(M^{(i)}\Sigma\bigr),
\qquad
M^{(i)}\coloneqq\nabla f\bigl(\Sigma^{(i)}\bigr),
\end{align}
and updates, with step size $\alpha_i=\tfrac{2}{i+2}$,
\begin{align}\label{eq:fw-update}
\Sigma^{(i+1)}=(1-\alpha_i)\,\Sigma^{(i)}+\alpha_i\,S^{(i)} .
\end{align}
Feasibility is automatic because $\Sigma^{(i+1)}$ is a convex combination of feasible points.
The Frank--Wolfe gap is
\[
G_i^{\mathrm{FW}}
\coloneqq
\Tr\!\left(
M^{(i)}\bigl(\Sigma^{(i)}-S^{(i)}\bigr)
\right)
\geq 0.
\]
It upper bounds the suboptimality $f(\Sigma^{(i)})-f^\star$ and therefore serves as a stopping criterion for the method.

In our setting, the LMO will be solved inexactly (by bisection, Section~\ref{sec:lmo}), so we state the convergence guarantee for $\delta$-approximate oracles, in which each iteration returns $S^{(i)}\in\mathcal{D}$ satisfying
\begin{align}\label{eq:inexact-lmo}
\Tr\bigl(M^{(i)}S^{(i)}\bigr)\le\min_{\Sigma\in\mathcal{D}}\Tr\bigl(M^{(i)}\Sigma\bigr)+\delta .
\end{align}

\begin{theorem}[Convergence]\label{thm:rate}
Let Assumption~\ref{ass:standing} hold, let $\Sigma^\star$ be the (unique) minimizer of \eqref{eq:synthesis}, and let $\{\Sigma^{(i)}\}$ be generated by \eqref{eq:fw-update} with $\alpha_i=\tfrac2{i+2}$ and a $\delta$-approximate LMO \eqref{eq:inexact-lmo}. Then for all $M\ge1$,
\begin{align}\label{eq:obj-rate}
f\bigl(\Sigma^{(M)}\bigr)-f\bigl(\Sigma^\star\bigr)
\;\le\;
\tfrac{2C_f}{M+2}+\delta,
\quad
C_f\le\tfrac{8\,\|W^{(1)}\|_2\,\beta^2}{\lambda_{\min}^3(\Sigma_0)\,\mu^2},
\end{align}
where $C_f$ is the curvature constant of $f$ over $\mathcal{D}$ and $\mu=\lambda_{\min}(W^{(2)})$. Moreover, by strong convexity,
\begin{align}\label{eq:iter-rate}
\bigl\|\Sigma^{(M)}-\Sigma^\star\bigr\|_F
\le
\sqrt{\tfrac{2}{m}\Bigl(\tfrac{2C_f}{M+2}+\delta\Bigr)}
\end{align}
with $m$ as in \eqref{eq:m-const}. In particular, with exact oracles ($\delta=0$) the objective converges at rate $\mathcal{O}(1/M)$ and the iterates at rate $\mathcal{O}(1/\sqrt{M})$.
\end{theorem}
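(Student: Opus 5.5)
The proof follows the standard Frank--Wolfe analysis of \cite{jaggi2013revisiting}, with the inexact oracle entering as an additive $\delta$. The constants come from Lemmas~\ref{lem:feasible-set} and~\ref{lem:objective}.

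\textbf{Step 1: bound the curvature constant.} Define
\[
C_f\coloneqq\sup_{\substack{\Sigma,S\in\mathcal{D},\,\gamma\in(0,1]\\ \Sigma'=\Sigma+\gamma(S-\Sigma)}}\tfrac{2}{\gamma^2}\Bigl(f(\Sigma')-f(\Sigma)-\Tr\bigl(\nabla f(\Sigma)(\Sigma'-\Sigma)\bigr)\Bigr).
\]
Every point involved lies in $\mathcal{D}\subset\{\Sigma\succeq0\}$, where $\nabla f$ is $L$-Lipschitz by Lemma~\ref{lem:objective}. The descent lemma therefore bounds the bracket by $\tfrac{L}{2}\gamma^2\|S-\Sigma\|_F^2$. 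This gives $C_f\le L\operatorname{diam}_F(\mathcal{D})^2\le \tfrac{2\|W^{(1)}\|_2}{\lambda_{\min}^3(\Sigma_0)}\cdot\tfrac{4\beta^2}{\mu^2}$, which is the constant claimed in \eqref{eq:obj-rate}.

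\textbf{Step 2: one-step recursion.} Let $h_i\coloneqq f(\Sigma^{(i)})-f^\star$. Apply the curvature bound with $S=S^{(i)}$ and $\gamma=\alpha_i$:
\[
f(\Sigma^{(i+1)})\le f(\Sigma^{(i)})-\alpha_i\Tr\bigl(M^{(i)}(\Sigma^{(i)}-S^{(i)})\bigr)+\tfrac{\alpha_i^2}{2}C_f .
\]
By \eqref{eq:inexact-lmo} and convexity of $f$,
\[
\Tr\bigl(M^{(i)}(\Sigma^{(i)}-S^{(i)})\bigr)\ge\Tr\bigl(M^{(i)}(\Sigma^{(i)}-\Sigma^\star)\bigr)-\delta\ge h_i-\delta .
\]
Combining the two gives $h_{i+1}\le(1-\alpha_i)h_i+\alpha_i\delta+\tfrac{\alpha_i^2}{2}C_f$. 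Set $e_i\coloneqq h_i-\delta$. Since $\delta$ is absorbed exactly, the recursion becomes $e_{i+1}\le(1-\alpha_i)e_i+\tfrac{\alpha_i^2}{2}C_f$.

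\textbf{Step 3: induction.} I will show $e_M\le\tfrac{2C_f}{M+2}$ for $M\ge1$. For the base case, $\alpha_0=1$ makes the first term vanish, so $e_1\le C_f/2\le 2C_f/3$. For the inductive step, $\bigl(1-\tfrac{2}{M+2}\bigr)\tfrac{2C_f}{M+2}+\tfrac{2C_f}{(M+2)^2}=\tfrac{2C_f(M+1)}{(M+2)^2}\le\tfrac{2C_f}{M+3}$. This is the usual bookkeeping and yields \eqref{eq:obj-rate}.

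\textbf{Step 4: iterate rate.} Existence of $\Sigma^\star$ follows from compactness of $\mathcal{D}$ (Lemma~\ref{lem:feasible-set}) and continuity of $f$. Uniqueness follows from strong convexity. Since $\mathcal{D}$ is convex and $\Sigma^\star$ is optimal, the first-order condition $\Tr\bigl(\nabla f(\Sigma^\star)(\Sigma-\Sigma^\star)\bigr)\ge0$ holds for all $\Sigma\in\mathcal{D}$. The $m$-strong convexity on $\mathcal{D}$ from Lemma~\ref{lem:objective} then gives $f(\Sigma^{(M)})-f^\star\ge\tfrac m2\|\Sigma^{(M)}-\Sigma^\star\|_F^2$. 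Combining this with Step 3 yields \eqref{eq:iter-rate}.

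The main subtlety is Step 2 with an inexact oracle. The $\delta$ must enter additively and not accumulate as $\sum_i\alpha_i\delta$. Shifting by $\delta$ handles this, because $(1-\alpha_i)\delta+\alpha_i\delta=\delta$. The other point to check is that the Lipschitz bound is used only on segments inside $\{\Sigma\succeq0\}$, which holds because $\mathcal{D}$ is convex.
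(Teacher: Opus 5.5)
Your proposal is correct and follows the paper's proof essentially step for step. You use the same curvature bound $C_f\le L\operatorname{diam}_F(\mathcal{D})^2$ via the descent lemma, the same recursion $h_{i+1}\le(1-\alpha_i)h_i+\alpha_i\delta+\tfrac{\alpha_i^2}{2}C_f$, the same induction with base case from $\alpha_0=1$, and the same strong-convexity and first-order-optimality argument for the iterates. Your shift $e_i=h_i-\delta$ is only a repackaging of the paper's choice to carry the $+\delta$ inside the inductive hypothesis.
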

\begin{proof}
	See Appendix~\ref{app:rate}. Uniqueness and existence of $\Sigma^\star$ follow from strong convexity and compactness of $\mathcal{D}$.
\end{proof}
\begin{remark}[Realizing the iterates as a policy]\label{rem:mixture}
	Since $\alpha_0=1$, unrolling \eqref{eq:fw-update} yields
	\[
	\Sigma^{(M)}=\sum_{k=0}^{M-1}p_kS^{(k)},
	\qquad p_k=\tfrac{2(k+1)}{M(M+1)}.
	\]
	Each search point $S^{(k)}$ is induced by an explicit policy $\pi^{(k)}$ returned by the LMO. Hence, the randomized policy $\tilde{\pi}$ that, at $t=0$, draws $k\in\{0,\dots,M-1\}$ with probability $p_k$ and follows $\pi^{(k)}$ for the entire horizon satisfies
	\[
	\cov{\tilde{\pi}}
	=\sum_{k=0}^{M-1}p_k\cov{\pi^{(k)}}
	=\Sigma^{(M)}
	\]
	exactly, and therefore inherits the guarantee \eqref{eq:obj-rate}. See also \cite{jaggi2013revisiting,mutny2023active}.
\end{remark}
\section{The linear subproblem is constrained LQ}\label{sec:lmo}

Fix a gradient matrix $M\coloneqq M^{(i)}\prec0$ (cf.~\eqref{eq:gradient}) and consider the LMO \eqref{eq:lmo}. Using
\[
\Tr(M\cov{\pi})
=\mathbb{E}^\pi\left[\sum_{t=0}^{H-1}z_t^\top Mz_t\right],
\]
the LMO is the optimal control problem
\begin{equation}\label{eq:lmo-control}
p^\star=\min_{\pi\in\Pi}\ \mathbb{E}^{\pi}\!\Big[\sum_{t=0}^{H-1}z_t^\top M z_t\Big]
\quad\text{s.t.}\quad
\Tr\bigl(W^{(2)}\cov{\pi}\bigr)\le\beta,
\end{equation}
i.e., a finite-horizon LQ problem with a negative definite stage cost and a single scalar quadratic constraint. We solve it by dualizing the constraint. For $\lambda\ge0$, define the stage weight $\widetilde M(\lambda)\coloneqq M+\lambda W^{(2)}$ and
\begin{multline}
	g(\lambda)\coloneqq-\lambda\beta+v(\lambda),\;
	v(\lambda)\coloneqq\inf_{\pi\in\Pi}\mathbb{E}^\pi\!\Big[\sum_{t=0}^{H-1}z_t^\top\widetilde M(\lambda)z_t\Big].
	\label{eq:dual-fn}
\end{multline}
The inner problem in \eqref{eq:dual-fn} is an unconstrained generalized LQ problem. Partition $\widetilde M(\lambda)$ conformally with $(x,u)$ as $\widetilde M^{xx}$, $\widetilde M^{xu}$, $\widetilde M^{ux}$, and $\widetilde M^{uu}$. With $P_H(\lambda)=0$, the optimal policy is obtained from the backward Riccati recursion
\begin{equation}\label{eq:riccati}
\begin{aligned}
S_t(\lambda)&=\widetilde M^{uu}(\lambda)
 +B^\top P_{t+1}(\lambda)B,\\
K_t(\lambda)&=S_t(\lambda)^{-1}
\bigl(\widetilde M^{ux}(\lambda)+B^\top P_{t+1}(\lambda)A\bigr),\\
P_t(\lambda)&=\widetilde M^{xx}(\lambda)
 +A^\top P_{t+1}(\lambda)A\\
&\quad-\bigl(\widetilde M^{ux}(\lambda)
 +B^\top P_{t+1}(\lambda)A\bigr)^\top K_t(\lambda).
\end{aligned}
\end{equation}
We say $\lambda$ is \emph{admissible} if $S_t(\lambda)\succ0$ for all $t$, and write $\Lambda\subseteq[0,\infty)$ for the set of admissible $\lambda$. For admissible $\lambda$, a standard dynamic programming argument shows that the unique optimal policy is the linear feedback $u_t=-K_t(\lambda)x_t$, denoted $\pi_\lambda$, with value
\[
v(\lambda)=\sum_{t=0}^{H-1}
\Tr\bigl(P_{t+1}(\lambda)\Sigma_w\bigr).
\]
Its covariance $\cov{\pi_\lambda}$, and hence the \emph{budget map}
\begin{align}\label{eq:budget-map}
b(\lambda)\coloneqq\Tr\bigl(W^{(2)}\cov{\pi_\lambda}\bigr),
\end{align}
is obtained by propagating the closed-loop second moments: with $\Sigma_{x,0}=0$ and $A_t\coloneqq A-BK_t(\lambda)$,
\[
\Sigma_{x,t+1}=A_t\Sigma_{x,t}A_t^\top+\Sigma_w,\qquad
\cov{\pi_\lambda}=\sum_{t=0}^{H-1}
\begin{bmatrix} I\\ -K_t \end{bmatrix}
\Sigma_{x,t}
\begin{bmatrix} I\\ -K_t \end{bmatrix}^{\!\top}.
\]
The next lemma collects the properties that lead to a principled bisection procedure for $\lambda$.
\begin{lemma}[Structure of the dual family]\label{lem:subproblem}
Let $M\prec0$, $\mu=\lambda_{\min}(W^{(2)})$, and let Assumption~\ref{ass:standing} hold. Then:
\begin{enumerate}
\item[\textit{(i)}] With $\lambda_{\mathrm{crit}}\coloneqq\inf\Lambda$, we have
$\Lambda=(\lambda_{\mathrm{crit}},\infty)$ with
$0<\lambda_{\mathrm{crit}}\le\|M\|_2/\mu$. In particular, every $\lambda>\|M\|_2/\mu$ is admissible.
\item[\textit{(ii)}] $b(\cdot)$ is continuous and nonincreasing on $\Lambda$.
\item[\textit{(iii)}] (Certificate) For every admissible $\lambda$ with $b(\lambda)\le\beta$, the policy $\pi_\lambda$ is feasible for \eqref{eq:lmo-control} and
\[
\Tr\bigl(M\cov{\pi_\lambda}\bigr)
\le p^\star+\lambda\bigl(\beta-b(\lambda)\bigr).
\]
In particular, if $b(\lambda)=\beta$ then $\pi_\lambda$ is exactly optimal.
\item[\textit{(iv)}] (Bracket) With $b_0$ as defined in Assumption~\ref{ass:standing}, $b(\lambda)\le\beta$ for all
\[
\lambda\ge\bar{\lambda}
\coloneqq
\tfrac{\beta}{\beta-b_0}\,\tfrac{\|M\|_2}{\mu}.
\]
\end{enumerate}
\end{lemma}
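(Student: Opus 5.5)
Throughout I will use the value-function characterization of admissibility: $\lambda\in\Lambda$ iff the unconstrained problem $v(\lambda)$ has a finite infimum with a unique minimizer at every stage. Equivalently, the cost-to-go is a strictly convex quadratic in $u$ at each $t$. Because $\Sigma_w\succ0$, the state $x_t$ for $t\ge1$ has full-support second moments. I would therefore argue at the level of the quadratic form $J_\lambda(\pi)=\mathbb{E}^\pi\sum_t z_t^\top\widetilde M(\lambda)z_t$, which is affine and increasing in $\lambda$ since $J_\lambda=\Tr(M\cov{\pi})+\lambda\Tr(W^{(2)}\cov{\pi})$.

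\emph{Step (i).} If $\lambda>\|M\|_2/\mu$, then $\widetilde M(\lambda)\succeq M+\lambda\mu I\succ0$, so $P_t\succeq0$ by induction. Hence $S_t\succeq\widetilde M^{uu}\succ0$ and $\lambda$ is admissible, giving $\lambda_{\mathrm{crit}}\le\|M\|_2/\mu$.

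For $\lambda=0$ we have $\widetilde M^{uu}=M^{uu}\prec0$ and $P_H=0$, so $S_{H-1}\prec0$; the same holds for small $\lambda>0$ by continuity. Thus $\lambda_{\mathrm{crit}}>0$.

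To show $\Lambda$ is an interval, I would show $S_t(\lambda)$ and $P_t(\lambda)$ are monotone in $\lambda$ on $\Lambda$. Each $P_t(\lambda)$ is the Hessian of an infimum over policies of a quantity increasing in $\lambda$. Thus for $\lambda'>\lambda$ in $\Lambda$ one gets $P_t(\lambda')\succeq P_t(\lambda)$ backwards, which gives $S_t(\lambda')\succeq S_t(\lambda)\succ0$ and hence upward closedness. Openness of $\Lambda$ follows from continuity of the recursion \eqref{eq:riccati} in $\lambda$ while all $S_t\succ0$. The delicate point is showing that at a $\lambda$ where some $S_t$ is merely singular PSD, $\lambda\notin\Lambda$; this holds by definition, so $\Lambda=(\lambda_{\mathrm{crit}},\infty)$.

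\emph{Step (ii).} Continuity of $b$ on $\Lambda$ follows because $K_t(\lambda)$ is a rational function of $\lambda$ with nonsingular $S_t$, and the covariance propagation is polynomial in the $K_t$. For monotonicity I would use the standard exchange argument. For $\lambda<\lambda'$ in $\Lambda$, optimality of $\pi_\lambda$ and $\pi_{\lambda'}$ gives
\[
J_\lambda(\pi_\lambda)\le J_\lambda(\pi_{\lambda'})
\quad\text{and}\quad
J_{\lambda'}(\pi_{\lambda'})\le J_{\lambda'}(\pi_\lambda).
\]
Adding these inequalities and using the affine dependence on $\lambda$ yields $(\lambda'-\lambda)\bigl(b(\lambda')-b(\lambda)\bigr)\le0$.

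\emph{Step (iii).} This is weak duality. For any feasible $\pi$,
\[
\Tr(M\cov{\pi})\ge\Tr(M\cov{\pi})+\lambda\bigl(\Tr(W^{(2)}\cov{\pi})-\beta\bigr)\ge v(\lambda)-\lambda\beta=g(\lambda),
\]
so $p^\star\ge g(\lambda)$. Also $v(\lambda)=\Tr(M\cov{\pi_\lambda})+\lambda b(\lambda)$, which gives
\[
\Tr(M\cov{\pi_\lambda})=g(\lambda)+\lambda\bigl(\beta-b(\lambda)\bigr)\le p^\star+\lambda\bigl(\beta-b(\lambda)\bigr).
\]
Feasibility of $\pi_\lambda$ is immediate from $b(\lambda)\le\beta$.

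\emph{Step (iv).} This is the step I expect to be the main obstacle, since it needs a quantitative bound. Let $\pi_0$ attain $b_0$ (the LQ minimizer of the energy). Optimality of $\pi_\lambda$ for $J_\lambda$ gives
\[
\Tr(M\cov{\pi_\lambda})+\lambda b(\lambda)\le\Tr(M\cov{\pi_0})+\lambda b_0\le\lambda b_0,
\]
where the last inequality uses $M\prec0$. On the other side, $\Tr(M\cov{\pi_\lambda})\ge-\|M\|_2\Tr(\cov{\pi_\lambda})\ge-\|M\|_2\,b(\lambda)/\mu$. Combining the two,
\[
b(\lambda)\bigl(\lambda-\|M\|_2/\mu\bigr)\le\lambda b_0 .
\]
For $\lambda>\|M\|_2/\mu$ this says $b(\lambda)\le\lambda b_0/(\lambda-\|M\|_2/\mu)$. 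The right-hand side is at most $\beta$ exactly when $\lambda\ge\beta\|M\|_2/\bigl(\mu(\beta-b_0)\bigr)=\bar\lambda$. Since $\bar\lambda>\|M\|_2/\mu$, every such $\lambda$ is admissible by (i), and the claim follows.
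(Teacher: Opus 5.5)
Your proposal is correct and follows the paper's proof step for step: the $\widetilde M(\lambda)\succ0$ argument and backward monotone induction on $P_t(\lambda)$ for (i), the exchange argument for (ii), weak duality for (iii), and the same bounds $v(\lambda)\le\lambda b_0$ and $v(\lambda)\ge(\lambda-\|M\|_2/\mu)\,b(\lambda)$ for (iv). The one place to phrase more carefully is upward closedness in (i): run the backward induction assuming only $\lambda\in\Lambda$ and $\lambda'>\lambda$, and derive $S_t(\lambda')\succeq S_t(\lambda)+(\lambda'-\lambda)W^{(2),uu}\succ0$ at each stage before $P_t(\lambda')$ is used, as the paper does, rather than comparing two points already assumed to lie in $\Lambda$ (which would be circular).
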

\begin{proof}
	See Appendix~\ref{app:subproblem}.
\end{proof}
Lemma~\ref{lem:subproblem} yields the following LMO procedure. Initialize $[\lambda_{\mathrm{lo}},\lambda_{\mathrm{hi}}]=[0,\bar\lambda]$. By \textit{(i)} and \textit{(iv)}, $\lambda_{\mathrm{hi}}$ is admissible and satisfies $b(\lambda_{\mathrm{hi}})\le\beta$. At each step, evaluate the midpoint $\lambda$. If \eqref{eq:riccati} fails to satisfy $S_t(\lambda)\succ0$ for some $t$, or if $b(\lambda)>\beta$, set $\lambda_{\mathrm{lo}}\leftarrow\lambda$; otherwise, set $\lambda_{\mathrm{hi}}\leftarrow\lambda$. By \textit{(ii)}, the upper endpoint remains admissible with $b(\lambda_{\mathrm{hi}})\le\beta$, while \textit{(iii)} bounds the LMO error of $\pi_{\lambda_{\mathrm{hi}}}$ by $\lambda_{\mathrm{hi}}(\beta-b(\lambda_{\mathrm{hi}}))$. Stopping when this certificate is at most $\delta$ yields a $\delta$-approximate oracle in the sense of \eqref{eq:inexact-lmo}, as required by Theorem~\ref{thm:rate}. If $b(\lambda)=\beta$ has a solution in $\Lambda$, continuity and monotonicity ensure that the certificate converges to zero as the bracket shrinks.
\begin{remark}[Active budget and the degenerate case]\label{rem:degenerate}
Since $M\prec0$, every solution of \eqref{eq:lmo-control} exhausts the budget.
Convexity of $\mathcal C$ and strict feasibility of the budget constraint, guaranteed by Assumption~\ref{ass:standing}\textit{(ii)}, imply strong duality. At $\lambda=0$, the unconstrained negative-definite quadratic cost is unbounded below, so $g(0)=-\infty$. Hence every optimal multiplier satisfies $\lambda^\star>0$.
Complementary slackness therefore gives $\Tr(W^{(2)}\cov{\pi^\star})=\beta$. Typically, $b(\lambda)\to\infty$ as
$\lambda\downarrow\lambda_{\mathrm{crit}}$, where $\downarrow$ denotes
convergence from above. Hence, bisection finds
$\lambda^\star\in\Lambda$ with $b(\lambda^\star)=\beta$. If instead $b(\lambda)<\beta$ for all $\lambda\in\Lambda$, some $S_t(\lambda_{\mathrm{crit}})$ must be singular.
Indeed, each $S_t(\lambda)$ is continuous in $\lambda$. If every $S_t(\lambda_{\mathrm{crit}})$ were positive definite, the Riccati recursion would remain well posed for some $\lambda<\lambda_{\mathrm{crit}}$, contradicting the definition of $\lambda_{\mathrm{crit}}$.
The resulting null directions leave the fixed-$\lambda_{\mathrm{crit}}$ cost unchanged, so adding independent noise there increases the budget continuously.
Specifically, one may use $u_t=-K_tx_t+\eta_t$, where $\eta_t$ is independent noise supported on the null space of $S_t(\lambda_{\mathrm{crit}})$. This leaves the fixed-$\lambda_{\mathrm{crit}}$ cost unchanged, and the covariance of $\eta_t$ can be chosen so that $\Tr(W^{(2)}\cov{\pi})=\beta$.
The resulting policy is an exact solution by Lemma~\ref{lem:subproblem}\textit{(iii)}.
\end{remark}
\begin{remark}[Cost per iteration]
Each bisection step costs one Riccati recursion and one covariance propagation, i.e., $\mathcal{O}(Hd^3)$ arithmetic. The number of bisection steps to reach tolerance is logarithmic in $\bar\lambda/\delta$, so the overall method is computationally efficient compared to semidefinite-programming reformulations, and it scales to long horizons.
\end{remark}
\section{Extensions and applications}\label{sec:extensions}
The simplified problem \eqref{eq:synthesis} extends in several directions while preserving the structure of the Frank--Wolfe subproblems. In each case, the LMO remains a generalized LQ problem with a scalar bisection.
\subsection{Added LQR cost}
A natural variant includes a control cost penalty:
\begin{equation}\label{eq:synthesis-lqr}
	\min_{\Sigma\in\mathcal{D}}\;
	\Tr\!\Big(W^{(1)}(\Sigma_0+\Sigma)^{-1}\Big)
	+\Tr\bigl(W^{(0)}\Sigma\bigr).
\end{equation}
with $W^{(0)}\succeq0$ (e.g., $W^{(0)}=\operatorname{blkdiag}(Q,R)$). The extra term is linear in $\Sigma$, so Lemma~\ref{lem:objective} holds with the same constants ($L$ and $m$ are unaffected), and the only change to the method is that the LMO gradient becomes $M^{(i)}+W^{(0)}$, making the stage weight in \eqref{eq:riccati} equal to $M^{(i)}+W^{(0)}+\lambda W^{(2)}$. The budget constraint is retained to keep $\mathcal{D}$ compact. For large $\beta$ it is inactive, and \eqref{eq:synthesis-lqr} is effectively unconstrained. For $W^{(2)}=W^{(0)}$, for example, one could choose $\beta$ as the energy
$\Tr\bigl(W^{(2)}\Sigma^{\pi}\bigr)$ attained by LQR controller with additive probing noise, which provides a loose upper bound on the objective.

\subsection{Covariances over multiple horizons}\label{sec:multihorizon}
Let $0<\tau_1<\dots<\tau_K=H$ and
\[
\Sigma_j^{\pi}\coloneqq
\mathbb{E}^\pi\!\left[\sum_{t=0}^{\tau_j-1}z_tz_t^\top\right].
\]
The multi-horizon objective
\begin{equation}\label{eq:synthesis-mixture}
\min_{\pi\in\Pi}\ \sum_{j=1}^{K}\Tr\!\Big(W_j\bigl(\Sigma_0+\Sigma_{j}^{\pi}\bigr)^{-1}\Big)
\ \ \text{s.t.}\ \Tr\bigl(W^{(2)}\Sigma_{K}^{\pi}\bigr)\le\beta
\end{equation}
promotes informativeness at intermediate times as well as at the end of the experiment. Frank--Wolfe now runs over the tuple $(\Sigma_1^\pi,\dots,\Sigma_K^\pi)$, whose achievable set is convex by the same mixing argument. The linearized objective is
\[
\sum_j\Tr\bigl(M_j^{(i)}\Sigma_j^{\pi}\bigr)
=\mathbb{E}^\pi\!\left[\sum_t z_t^\top\widehat M_t^{(i)}z_t\right],
\]
with $W_j\succ0$ and
\[
M_j^{(i)}=-(\Sigma_0+\Sigma_j^{(i)})^{-1}W_j
(\Sigma_0+\Sigma_j^{(i)})^{-1},\qquad
\widehat M_t^{(i)}=\sum_{j:\,t<\tau_j}M_j^{(i)}.
\]
Since the Riccati recursion \eqref{eq:riccati} accommodates time-varying weights without modification, the LMO is again a generalized LQ problem, now with time-dependent stage weight $\widehat M_t^{(i)}+\lambda W^{(2)}$, with $\lambda$ found by scalar bisection as before. The analysis of Theorem~\ref{thm:rate} applies to the sum objective on the product feasible set, with curvature bounded by the sum of the per-component bounds.
\subsection{Input design for system identification}\label{sec:sysid}
Recall from~\ref{motiv} that, for system identification, \eqref{eq:synthesis} can be used to reduce the task-weighted estimation error by taking $\Sigma_0$ as the information matrix of previously collected data. Since $(A,B)$ are unknown, we proceed episodically using certainty equivalence. At episode $k$, we form the least-squares estimate
\[
\widehat\theta^{(k)}
=\bar\Sigma^{(k)}\bigl(\Sigma^{(k)}\bigr)^{-1},
\quad
\bar\Sigma^{(k)}\coloneqq\sum x_{t+1}z_t^\top,
\quad
\Sigma^{(k)}\coloneqq\sum z_tz_t^\top,
\]
where $\Sigma^{(k)}$ and $\bar\Sigma^{(k)}$ aggregate all data collected so far. We then solve \eqref{eq:synthesis} using the estimated dynamics and $\Sigma_0=\Sigma^{(k)}$ for $M$ Frank--Wolfe iterations, obtaining policies $\pi^{(k,0)},\dots,\pi^{(k,M-1)}$. Following Remark~\ref{rem:mixture}, we sample $\pi^{(k,j)}$ with probability $p_j=\tfrac{2(j+1)}{M(M+1)}$, execute it on true system for $H$ steps, update $(\Sigma^{(k)},\bar\Sigma^{(k)})$, and re-estimate the dynamics. Unlike \cite{wagenmaker2020active,wagenmaker2021task}, which optimize over restricted classes such as periodic inputs, our method optimizes directly over covariances achievable by causal feedback policies. Related Frank--Wolfe approaches to experiment design appear in \cite{mutny2023active,wagenmaker2024optimal}.
\subsection{Adaptive online LQR}\label{sec:adaptive}
Finally, the multi-horizon variant provides a computationally tractable approach to dual control. Naive exploration schemes \cite{simchowitz2020naive} add tuned random noise to a certainty equivalent controller, an \emph{explicit} dual control strategy \cite{aastrom2013adaptive,wittenmark1995adaptive}, while exact \emph{implicit} formulations via hyperstates are intractable \cite{rosdahl2022dual}. Related in spirit to our approach, the intrinsic-reward LQR algorithm of \cite{bartos2026optimistic} promotes uncertainty-driven exploration by augmenting the certainty equivalent synthesis cost while retaining the structure of a standard LQR problem. The following control oriented experiment design lies between explicit and implicit dual control. At each update time $\tau_k$, the learner maintains a posterior
$
\mathcal{N}\bigl(\operatorname{vec}(\hat\theta_k),
\,\Lambda_k^{-1}\otimes\Sigma_w\bigr)
$
over the vectorized parameters $\theta=[A\ B]$, where $\Lambda_k$ is the regularized information matrix formed from the data collected so far. It then minimizes the certainty equivalent control cost plus a prediction of the excess cost that future certainty equivalent controllers will incur due to estimation error:
\begin{align}
	\min_{\pi}\quad
	&\mathbb{E}^{\pi}_{\hat{\theta}_k}\!\Bigg[
	\sum_{t=\tau_k}^{T}
	\bigl(x_t^\top Qx_t+u_t^\top Ru_t\bigr)
	\Bigg]
	\label{eq:dual-control}\\
	&+\tfrac{1}{2}\sum_{m=k+1}^{K}(\tau_{m+1}-\tau_m)
	\Tr\!\Big(
	H(\hat\theta_k)
	\Bigl((\Lambda_k+\Sigma_m^\pi)^{-1}\!\otimes\Sigma_w\Bigr)
	\Big),
	\nonumber
\end{align}
where $\tau_{K+1}\coloneqq T$. The expectation is taken under the estimated dynamics $\hat\theta_k$ with the initial state fixed to the current state $x_{\tau_k}$, and
\[
\Sigma_m^{\pi}\coloneqq
\mathbb{E}^{\pi}_{\hat\theta_k}\!\left[
\sum_{t=\tau_k}^{\tau_m-1}z_tz_t^\top\right]
\]
is the covariance accumulated before the model is re-estimated at time $\tau_m$. The matrix $H(\hat\theta_k)\in\mathbb{R}^{dn\times dn}$ is the model-task Hessian, the Hessian of the certainty equivalent control cost with respect to the vectorized model parameters \cite{wagenmaker2021task}. Term $m$ in \eqref{eq:dual-control} predicts, via a second-order expansion of the control cost in the model parameters, the excess cost of the certainty equivalent controller that will be synthesized at time $\tau_m$ and deployed until $\tau_{m+1}$, since $(\Lambda_k+\Sigma_m^\pi)^{-1}\otimes\Sigma_w$ approximates the parameter error covariance at that point.

To connect \eqref{eq:dual-control} with the preceding sections, view $H(\hat\theta_k)$ as a $d\times d$ array of $n\times n$ blocks $[H]_{(i,j)}$, and define the partial trace $\Tr_{\Sigma_w}(H)\in\mathbb{R}^{d\times d}$ entrywise by
$
[\Tr_{\Sigma_w}(H)]_{ij}
\coloneqq\Tr\bigl(\Sigma_w[H]_{(i,j)}\bigr).
$
A direct computation shows that, for any symmetric $P$,
\[
\Tr\bigl(H(P\otimes\Sigma_w)\bigr)
=\Tr\bigl(\Tr_{\Sigma_w}(H)\,P\bigr).
\]
Thus, with
$
W_m\coloneqq\tfrac{1}{2}(\tau_{m+1}-\tau_m)
\Tr_{\Sigma_w}\bigl(H(\hat\theta_k)\bigr),
$
the penalty terms in \eqref{eq:dual-control} become $\Tr\bigl(W_m(\Lambda_k+\Sigma_m^\pi)^{-1}\bigr)$. Problem \eqref{eq:dual-control} is therefore the multi-horizon objective \eqref{eq:synthesis-mixture} with $\Sigma_0=\Lambda_k$, combined with an LQR term as in \eqref{eq:synthesis-lqr}. Retaining a budget constraint over the remaining horizon as in the preceding sections, the Frank--Wolfe machinery applies verbatim. The LMO at iteration $i$ is a generalized LQ problem with time-varying stage weights
\[
\widetilde M_t^{(i)}(\lambda)=\operatorname{blkdiag}(Q,R)+\sum_{m:\,t<\tau_m}M_m^{(i)}+\lambda W^{(2)},
\]
where $M_m^{(i)}=-(\Lambda_k+\Sigma_m^{(i)})^{-1}W_m(\Lambda_k+\Sigma_m^{(i)})^{-1}$ as in Section~\ref{sec:multihorizon}. The problem is re-solved at each update time in receding-horizon fashion, using the newly collected data to update $\hat\theta$ and $\Lambda$. Compared to naive exploration, the probing energy is placed only in directions that matter for control, which can enable lower regret when the system structure permits it.

\begin{figure*}[!t]
    \centering
    \begin{subfigure}[t]{0.32\textwidth}
        \centering
        \resizebox{\linewidth}{!}{\input{fw_objective_vs_beta.tex}}
        \caption{Covariance design.}
        \label{fig:fw-objective-vs-beta}
    \end{subfigure}\hfill
    \begin{subfigure}[t]{0.32\textwidth}
        \centering
        \resizebox{\linewidth}{!}{\input{system_id_error_mc.tex}}
        \caption{Identification error.}
        \label{fig:system-id-error}
    \end{subfigure}\hfill
    \begin{subfigure}[t]{0.32\textwidth}
        \centering
        \resizebox{\linewidth}{!}{\input{system_id_cost_mc.tex}}
        \caption{Design objective.}
        \label{fig:system-id-cost}
    \end{subfigure}
    \caption{Numerical results. (a) Objective value versus Frank--Wolfe iteration for several budgets $\beta$. (b) Identification error $\|\hat A^{(j)}-A\|_F+\|\hat B^{(j)}-B\|_F$ versus episode for Frank--Wolfe, certainty equivalence, naive exploration, and the periodic-input baseline. (c) Design objective versus episode for the same four methods.}
    \label{fig:numerics}
\end{figure*}
\section{Numerical examples}\label{sec:numerics}
We illustrate the approach on three tasks: covariance design with known dynamics, input design for system identification, and adaptive online LQR.\footnote{For implementation details and code, see \url{https://github.com/Fethi-Bencherki/Optimal-input-design-via-FW}.}
\subsection{Covariance design}
We first solve \eqref{eq:synthesis} for the known system
\[
A=\begin{bmatrix}0.9&1.0\\0.0&0.9\end{bmatrix},\quad
B=\begin{bmatrix}0\\1.0\end{bmatrix},\quad
\Sigma_w=0.02\,I_2,
\]
with $H=100$, $W^{(1)}=\operatorname{diag}(2.0,1.0,0.5)$, and $W^{(2)}=\operatorname{diag}(1.0,1.0,0.3)$. We initialize with a feasible covariance from the fixed-$\lambda$ LQ subproblem at $\lambda=1$, run the inner bisection to high accuracy, and perform $50$ Frank--Wolfe iterations for several budgets $\beta$. Figure~\ref{fig:fw-objective-vs-beta} shows monotone convergence, consistent with the $\mathcal{O}(1/M)$ rate of Theorem~\ref{thm:rate}; larger budgets yield more informative covariances and lower limiting values. Every iterate is feasible by construction.
\subsection{Experiment design for system identification}
Next we run the episodic scheme of Section~\ref{sec:sysid} on the same system with $(A,B)$ unknown. We compare against three baselines, all normalized to the same energy budget per episode\footnote{The budget constraint is enforced under the estimated dynamics and may therefore be violated under the true dynamics. If it is a hard constraint, it can be tightened by a margin that accounts for model uncertainty.}:
\begin{enumerate}
\item[\textit{(i)}] \emph{Certainty equivalence}, which rolls out the LQ policy for the current estimate without seeking exploration;
\item[\textit{(ii)}] \emph{Naive exploration}, which applies random inputs; and
\item[\textit{(iii)}] a frequency-search baseline based on \cite{wagenmaker2020active}, which selects the best periodic input for the current model estimate.
\end{enumerate}
At each episode we record the estimation error $\|\hat A^{(j)}-A\|_F+\|\hat B^{(j)}-B\|_F$. Figures~\ref{fig:system-id-error} and \ref{fig:system-id-cost} show the estimation error and design objective across Monte Carlo noise realizations.
\subsection{Adaptive online LQR}
For the adaptive online LQR experiment, we use
\[
A=\begin{bmatrix}1.2&1.0\\0&1.0\end{bmatrix},\qquad
B=\begin{bmatrix}0\\1\end{bmatrix},\qquad
\Sigma_w=0.09\,I_2.
\]
We apply the receding-horizon scheme of Section~\ref{sec:adaptive}, using the identity as a surrogate for the model-task Hessian $H(\hat\theta)$.

\begin{figure}[t]
    \centering
    \resizebox{\columnwidth}{!}{\input{adaptive_online_lqr_regret}}
    \caption{Cumulative regret in adaptive online LQR for the proposed method, certainty-equivalent LQR with naive exploration, and a sampling-based baseline.}
    \label{fig:adaptive-online-lqr-regret}
\end{figure}

Performance is measured by cumulative regret $R_T^\pi=C_T^\pi-TJ^\star$, where $C_T^\pi$ is the cumulative LQR cost and $J^\star$ is the optimal steady-state cost under the true dynamics. We compare against certainty-equivalent LQR with naive exploration noise and a sampling-based optimization using a large number of samples, which locally perturbs the certainty-equivalent gains and selects the best candidate under \eqref{eq:dual-control}. The latter serves as a reference to verify that the Frank--Wolfe procedure achieves similar behavior. Figure~\ref{fig:adaptive-online-lqr-regret} reports regret over 50 noise realizations. The Frank--Wolfe controller directs its exploration effort and achieves lower regret than the naive approach.
\section{Conclusion}
We presented a Frank--Wolfe approach to budget-constrained covariance design for linear systems. The method operates directly on covariance matrices achievable by causal policies, with each linear minimization subproblem reduced to a finite-horizon LQ problem solved by Riccati recursion and scalar bisection, yielding an $\mathcal{O}(1/M)$ convergence rate. The same machinery applies to system identification with unknown dynamics and to an adaptive LQR scheme interpolating between explicit and implicit dual control. Future work includes regret guarantees for the adaptive scheme and infinite-horizon formulations.
\section*{APPENDIX}

\subsection{Closedness of the achievable covariances set}\label{app:closed}

\begin{lemma}\label{lem:closed}
The set $\mathcal{C}$ of achievable covariance matrices is closed.
\end{lemma}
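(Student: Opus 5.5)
We will show that $\mathcal{C}$ contains all its limit points. The idea is to use the fact that a policy can be summarized by a joint law of the trajectory, and that the set of such laws (with bounded second moments) is sequentially compact in a topology under which covariances are continuous.

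\textbf{Step 1 (joint laws).} Fix $\Sigma^{(k)}=\cov{\pi_k}\to\Sigma$. Each causal policy $\pi_k$, together with the dynamics \eqref{eq:system}, induces a joint law $P_k$ of $(w_0,\dots,w_{H-1},u_0,\dots,u_{H-1})$. Under $P_k$ the disturbances are i.i.d.\ with the prescribed law. Causality means that $u_t$ is conditionally independent of $(w_t,\dots,w_{H-1})$ given $(w_{0:t-1},u_{0:t-1})$, since $x_{0:t}$ is a deterministic function of these. Conversely, any law with these properties is realized by a causal randomized policy: sample $u_t$ from the regular conditional law given the past. This is legitimate because $x_{0:t}$ determines $w_{0:t-1}$ given $u_{0:t-1}$ (as $w_s=x_{s+1}-Ax_s-Bu_s$). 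So $\mathcal{C}$ is exactly the set of matrices $\mathbb{E}_P[\sum_t z_tz_t^\top]$ over laws $P$ in a class $\mathcal{P}$ characterized by
\begin{itemize}
\item a marginal constraint on the disturbances, and
\item the conditional-independence constraint: $(w_t,\dots,w_{H-1})$ is independent of $(w_{0:t-1},u_{0:t})$, with the i.i.d.\ law.
\end{itemize}

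\textbf{Step 2 (tightness and limit).} Convergence $\Sigma^{(k)}\to\Sigma$ bounds the traces, so $\mathbb{E}_{P_k}\sum_t\|u_t\|^2$ is bounded. Hence $\{P_k\}$ is tight, and by Prokhorov a subsequence converges weakly to some $P$.

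\emph{Closedness of $\mathcal{P}$.} The constraint in Step 1 says the joint law of $\bigl((w_{0:t-1},u_{0:t}),\,w_{t:H-1}\bigr)$ is a product of its first marginal with the fixed disturbance law. Equivalently,
\[
\mathbb{E}[\phi(\cdot)\psi(w_{t:})]=\mathbb{E}[\phi]\,\mathbb{E}[\psi]
\]
for all bounded continuous $\phi,\psi$. This identity passes to weak limits, so $P\in\mathcal{P}$.

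\textbf{Step 3 (continuity of second moments).} This is the main obstacle: weak convergence alone does not carry second moments, and $z_t$ is linear in $(w,u)$. Two facts handle it.
\begin{itemize}
\item Lower semicontinuity: $\mathbb{E}_P[zz^\top]\preceq\liminf$ in the sense of quadratic forms, via truncation.
\item Uniform integrability: from this we would get equality.
\end{itemize}
The disturbance part is fixed and hence uniformly integrable. For the inputs, uniform integrability of $\|u_t\|^2$ is not given, so I would avoid needing it. A probing amplitude escaping to infinity with vanishing probability adds a PSD defect $D$ to the limit:
\[
\Sigma=\mathbb{E}_P\Bigl[\sum_t z_tz_t^\top\Bigr]+D,\qquad D\succeq0.
\]

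It then remains to show that $\mathbb{E}_P[\sum z z^\top]+D$ is achievable. My first attempt would be to realize $D$ by a mixture/limiting construction. Concretely, I would rescale the escaping mass: represent the defect as the limit of laws of $z/\sqrt{\epsilon}$ on events of probability $\epsilon$. This amounts to showing that the set of achievable \emph{defect} matrices is the same cone as the recession cone of $\mathcal{C}$, and that $\mathcal{C}+\operatorname{rec}(\mathcal{C})\subseteq\mathcal{C}$. The latter holds by appending independent scaled probing rare events.

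If this becomes delicate, a cleaner fallback uses the fact that every linear functional's minimum over $\mathcal{C}$ is attained by an LQ policy. The procedure would be:
\begin{enumerate}
\item Write the closure as the intersection of the half-spaces $\{\Tr(M\Sigma)\ge v(M)\}$.
\item Verify directly, via dynamic programming with randomized inputs, that any $\Sigma$ satisfying all of them is realized, using Gaussian policies of the form $u_t=K_tx_t+\eta_t$.
\end{enumerate}
This gives an explicit parametrization via $\mathbb{E}[u_tx_t^\top]$ and $\mathbb{E}[u_tu_t^\top]$ with a Schur-complement constraint, which is manifestly closed.
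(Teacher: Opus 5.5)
Your Steps 1--2 are sound: the traces bound $\mathbb{E}_{P_k}\sum_t\|u_t\|^2$, and the product-form independence constraint survives weak limits. The gap is at the end of Step 3, which carries the whole content of the lemma, and neither continuation closes it. There you only know $\Sigma=\mathbb{E}_P[\sum_t z_tz_t^\top]+D$ with $D\succeq0$. An arbitrary PSD matrix cannot be added to a point of $\mathcal{C}$: already for $H=1$ one has $\mathcal{C}=\{\operatorname{blkdiag}(0,U):U\succeq0\}$, because $x_0=0$. So you would first have to show that $D$ is structured, $D=\sum_tD_t$ with $D_t\succeq0$, $[D_0]_{xx}=0$ and $[D_{t+1}]_{xx}=[A\ B]D_t[A\ B]^\top$. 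You would then have to realize it exactly.

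Your rare-event device does not do this. By superposition of an independent zero-mean component, it adds $D$ exactly only if $D$ is already the second moment of a noise-free trajectory $\tilde x_{t+1}=A\tilde x_t+B\tilde u_t$, $\tilde x_0=0$. You know $D$ only as a limit of escaping mass, so establishing that form is a closedness statement of the same kind as the lemma. Approximate realization only gives $\Sigma\in\overline{\mathcal{C}}$, which is circular. With the usual definition of the recession cone, $\mathcal{C}+\operatorname{rec}(\mathcal{C})\subseteq\mathcal{C}$ is a tautology; the unproved part is $D\in\operatorname{rec}(\mathcal{C})$. Getting the structure of $D$ requires tracking the per-time limits of $\mathbb{E}_{P_k}[z_tz_t^\top]$, and once you do that, Prokhorov and the weak-limit law $P$ become unnecessary.

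Your fallback points at the right object but does not supply the argument. The intersection of the half-spaces $\{\Tr(M\Sigma)\ge v(M)\}$ is just $\overline{\mathcal{C}}$. Dynamic programming identifies minimizers of individual linear functionals; it does not show that every point of that intersection is achieved, so that step has no content as stated.

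The paper instead parametrizes $\mathcal{C}$ directly by the per-time moments $\Sigma_{z,t}=\mathbb{E}[z_tz_t^\top]$, with $X_t=[\Sigma_{z,t}]_{xx}$. Every policy gives $\Sigma_{z,t}\succeq0$, $[\Sigma_{z,0}]_{xx}=0$ and $[\Sigma_{z,t+1}]_{xx}=[A\ B]\Sigma_{z,t}[A\ B]^\top+\Sigma_w$, using that $w_t$ is zero mean and independent of $z_t$. Conversely, every such sequence is realized by $u_t=K_tx_t+\eta_t$ with $K_t=[\Sigma_{z,t}]_{ux}X_t^+$ and $\eta_t$ having the generalized Schur complement as covariance. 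The range inclusion $\operatorname{range}([\Sigma_{z,t}]_{xu})\subseteq\operatorname{range}(X_t)$ is what makes $\mathbb{E}[x_tu_t^\top]$ come out right.

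Even with this parametrization, $\mathcal{C}$ is not ``manifestly closed''. It is the image of a closed set under $(\Sigma_{z,t})_t\mapsto\sum_t\Sigma_{z,t}$, and linear images of closed sets need not be closed. The step you are missing is the bound $0\preceq\Sigma^{(k)}_{z,t}\preceq\Sigma^{(k)}$. It makes each per-time sequence bounded, so a common subsequence converges, and the limit still satisfies the three constraints.
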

\begin{proof}
	For $\pi\in\Pi$, let $\Sigma_{z,t}\coloneqq\mathbb{E}^\pi[z_tz_t^\top]$, and let $[\cdot]_{xx},[\cdot]_{xu},[\cdot]_{ux},[\cdot]_{uu}$ denote the blocks conformal with $(x,u)$. Every such sequence satisfies
\begin{itemize}
	\item[\textit{(a)}] $\Sigma_{z,t}\succeq0$ for $t=0,\dots,H-1$;
	\item[\textit{(b)}] $[\Sigma_{z,0}]_{xx}=0$;
	\item[\textit{(c)}] $[\Sigma_{z,t+1}]_{xx}=[A\ B]\Sigma_{z,t}[A\ B]^\top+\Sigma_w$ for $t=0,\dots,H-2$.
\end{itemize}
	Indeed, \textit{(b)} follows from $x_0=0$, while \textit{(c)} follows from \eqref{eq:system} because $w_t$ is zero mean and independent of $z_t$.
	
	Conversely, let $(\Sigma_{z,t})_{t=0}^{H-1}$ satisfy \textit{(a)}--\textit{(c)}, set $X_t\coloneqq[\Sigma_{z,t}]_{xx}$, and consider $u_t=K_tx_t+\eta_t$, with
\[
K_t\coloneqq[\Sigma_{z,t}]_{ux}X_t^+,
\qquad
\Theta_t\coloneqq[\Sigma_{z,t}]_{uu}
-[\Sigma_{z,t}]_{ux}X_t^+[\Sigma_{z,t}]_{xu},
\]
where $\eta_t$ is zero mean with covariance $\Theta_t$, independent across time and of the disturbances. By the generalized Schur complement, $\Sigma_{z,t}\succeq0$ implies $\operatorname{range}([\Sigma_{z,t}]_{xu})\subseteq\operatorname{range}(X_t)$ and $\Theta_t\succeq0$,
	so the policy is well defined. An induction shows that $\mathbb{E}[z_tz_t^\top]=\Sigma_{z,t}$. Indeed, if $\mathbb{E}[x_tx_t^\top]=X_t$, which holds at $t=0$ by \textit{(b)}, then
\begin{align*}
\mathbb{E}[x_tu_t^\top]
=X_tK_t^\top=[\Sigma_{z,t}]_{xu},\,
\mathbb{E}[u_tu_t^\top]
=K_tX_tK_t^\top+\Theta_t=[\Sigma_{z,t}]_{uu}.
\end{align*}
	where the range inclusion is used in both identities; hence $\mathbb{E}[z_tz_t^\top]=\Sigma_{z,t}$, and \textit{(c)} yields $\mathbb{E}[x_{t+1}x_{t+1}^\top]=X_{t+1}$.
	Therefore,
	\[
	\mathcal{C}=\left\{\sum_{t=0}^{H-1}\Sigma_{z,t}:(\Sigma_{z,t})_t\in\mathcal{T}\right\},
	\]
	where $\mathcal{T}$ denotes the set of sequences satisfying \textit{(a)}--\textit{(c)}. Let $\Sigma^{(k)}\to\bar\Sigma$ with $\Sigma^{(k)}\in\mathcal{C}$, realized by $(\Sigma_{z,t}^{(k)})_t\in\mathcal{T}$. Since $0\preceq\Sigma_{z,t}^{(k)}\preceq\sum_{s=0}^{H-1}\Sigma_{z,s}^{(k)}=\Sigma^{(k)}$,
	each sequence $(\Sigma_{z,t}^{(k)})_k$ is bounded. As $H$ is finite, a common subsequence satisfies $\Sigma_{z,t}^{(k)}\to\bar\Sigma_{z,t}$ for all $t$. The constraints \textit{(a)}--\textit{(c)} are preserved under limits, so $(\bar\Sigma_{z,t})_t\in\mathcal{T}$ and
	$
	\bar\Sigma=\sum_{t=0}^{H-1}\bar\Sigma_{z,t}\in\mathcal{C}.
	$
	Thus, $\mathcal{C}$ is closed.
\end{proof}
\subsection{Proof of Lemma~\ref{lem:objective}}\label{app:objective}
\emph{Convexity.} Throughout, $\langle X,Y\rangle=\Tr(X^\top Y)$. Convexity of $f$ follows since $X\mapsto X^{-1}$ is matrix convex on the positive definite cone, $\Sigma\mapsto\Sigma_0+\Sigma$ is affine, and $\Tr(W^{(1)}\cdot)$ is linear and monotone. The gradient formula \eqref{eq:gradient} is standard. Since $\Sigma_0\succ0$ and $\Sigma\succeq0$, we have $Y\coloneqq(\Sigma_0+\Sigma)^{-1}\succ0$. Thus, for every $v\neq0$,
$
v^\top YW^{(1)}Yv=(Yv)^\top W^{(1)}(Yv)>0,
$
because $W^{(1)}\succ0$ and $Yv\neq0$. Hence $YW^{(1)}Y\succ0$, and therefore $\nabla f(\Sigma)=-YW^{(1)}Y\prec0$.

\emph{Smoothness.} Fix $\Sigma,\Sigma'\succeq0$, let $\Delta\coloneqq\Sigma'-\Sigma$, and define
\[
Y_\varepsilon\coloneqq(\Sigma_0+\Sigma+\varepsilon\Delta)^{-1},
\quad
G(\varepsilon)\coloneqq\nabla f(\Sigma+\varepsilon\Delta)
=-Y_\varepsilon W^{(1)}Y_\varepsilon .
\]
Since $\Sigma+\varepsilon\Delta=(1-\varepsilon)\Sigma+\varepsilon\Sigma'\succeq0$,
$\|Y_\varepsilon\|_2\le\lambda_{\min}(\Sigma_0)^{-1}$. Moreover,
$Y_\varepsilon'=-Y_\varepsilon\Delta Y_\varepsilon$, so
\[
G'(\varepsilon)
=
Y_\varepsilon\Delta Y_\varepsilon W^{(1)}Y_\varepsilon
+
Y_\varepsilon W^{(1)}Y_\varepsilon\Delta Y_\varepsilon,
\]
Using $\|XYZ\|_F\le\|X\|_2\|Y\|_F\|Z\|_2$, we obtain
\[
\|G'(\varepsilon)\|_F
\le
2\|Y_\varepsilon\|_2^3\|W^{(1)}\|_2\|\Delta\|_F
\le
L\|\Delta\|_F.
\]
Therefore,
\[
\|\nabla f(\Sigma')-\nabla f(\Sigma)\|_F
\le
\int_0^1\|G'(\varepsilon)\|_F\,\mathrm{d}\varepsilon
\le
L\|\Sigma'-\Sigma\|_F,
\]
where $L$ is given by \eqref{eq:L-const}.

\emph{Strong convexity.} It suffices to show that, for every $\Sigma\in\mathcal{D}$ and symmetric $\Delta$, $\nabla^2 f(\Sigma)[\Delta,\Delta]\coloneqq\left.\tfrac{\mathrm{d}^2}{\mathrm{d}t^2}f(\Sigma+t\Delta)\right|_{t=0}\ge m\|\Delta\|_F^2$. Let $Y\coloneqq(\Sigma_0+\Sigma)^{-1}$. Direct differentiation gives $\nabla^2f(\Sigma)[\Delta,\Delta]=2\Tr\bigl(W^{(1)}Y\Delta Y\Delta Y\bigr)$.
Since
\[
Y\Delta Y\Delta Y
=Y^{1/2}\bigl(Y^{1/2}\Delta Y^{1/2}\bigr)^2Y^{1/2}
\succeq0
\]
and $W^{(1)}\succeq\lambda_{\min}(W^{(1)})I$, we obtain
\begin{align*}
\Tr\bigl(W^{(1)}Y\Delta Y\Delta Y\bigr)
&\ge\lambda_{\min}(W^{(1)})\Tr\bigl(Y\Delta Y\Delta Y\bigr)\\
&\ge\lambda_{\min}(W^{(1)})\lambda_{\min}(Y)
\bigl\|Y^{1/2}\Delta Y^{1/2}\bigr\|_F^2\\
&\ge\lambda_{\min}(W^{(1)})\lambda_{\min}(Y)^3
\|\Delta\|_F^2.
\end{align*}
The final inequality follows from
$\|Y^{1/2}\Delta Y^{1/2}\|_F
\ge\lambda_{\min}(Y)\|\Delta\|_F$.
By Lemma~\ref{lem:feasible-set}, $\lambda_{\min}(Y)\ge\bigl(\lambda_{\max}(\Sigma_0)+\tfrac{\beta}{\mu}\bigr)^{-1}$, yielding \eqref{eq:m-const}. \hfill$\blacksquare$
\subsection{Proof of Theorem~\ref{thm:rate}}\label{app:rate}
Recall that
\[
C_f\coloneqq\sup_{\substack{\Sigma,S\in\mathcal{D}\\ \gamma\in(0,1]}}
\tfrac{2}{\gamma^2}\!\left[f\bigl(\Sigma+\gamma(S-\Sigma)\bigr)-f(\Sigma)-\gamma\langle\nabla f(\Sigma),S-\Sigma\rangle\right].
\]
Since $\nabla f$ is $L$-Lipschitz, the descent lemma gives
\[
f\bigl(\Sigma+\gamma(S-\Sigma)\bigr)\le f(\Sigma)+\gamma\langle\nabla f(\Sigma),S-\Sigma\rangle+\tfrac{\gamma^2L}{2}\|S-\Sigma\|_F^2.
\]
Thus, by Lemma~\ref{lem:feasible-set} and \eqref{eq:L-const},
\[
C_f\le L\operatorname{diam}_F(\mathcal{D})^2\le L\left(\tfrac{2\beta}{\mu}\right)^2
=\tfrac{8\|W^{(1)}\|_2\beta^2}{\lambda_{\min}^3(\Sigma_0)\mu^2}.
\]
Let $h_i\coloneqq f(\Sigma^{(i)})-f(\Sigma^\star)$ and $M^{(i)}\coloneqq\nabla f(\Sigma^{(i)})$. By the curvature bound, the $\delta$-approximate LMO, and convexity,
\begin{multline*}
	h_{i+1}
	\le h_i+\alpha_i\langle M^{(i)},S^{(i)}-\Sigma^{(i)}\rangle+\tfrac{\alpha_i^2C_f}{2}
	\le h_i\\+\alpha_i\bigl(\langle M^{(i)},\Sigma^\star-\Sigma^{(i)}\rangle+\delta\bigr)+\tfrac{\alpha_i^2C_f}{2}
	\le(1-\alpha_i)h_i+\alpha_i\delta+\tfrac{\alpha_i^2C_f}{2},
\end{multline*}
where $\langle M^{(i)},\Sigma^\star-\Sigma^{(i)}\rangle\le-h_i$. We prove by induction that $h_i\le 2C_f/(i+2)+\delta$ for all $i\ge1$. Since $\alpha_0=1$, $h_1\le\delta+C_f/2\le\delta+2C_f/3$. Now, assume $h_i\le 2C_f/(i+2)+\delta$ for some $i\ge1$. Substituting this bound and $\alpha_i=2/(i+2)$ into the recursion gives
\begin{align*}
h_{i+1}
&\le\left(1-\tfrac{2}{i+2}\right)
\left(\tfrac{2C_f}{i+2}+\delta\right)
+\tfrac{2\delta}{i+2}+\tfrac{2C_f}{(i+2)^2}\\
&=\tfrac{2C_f(i+1)}{(i+2)^2}+\delta
\le\tfrac{2C_f}{i+3}+\delta.
\end{align*}
where the last inequality follows from $(i+1)(i+3)\le(i+2)^2$. Hence, by induction,
\[
f(\Sigma^{(M)})-f(\Sigma^\star)=h_M\le\tfrac{2C_f}{M+2}+\delta,
\]
which proves \eqref{eq:obj-rate}.
Finally, strong convexity and first-order optimality of $\Sigma^\star$ give, for every $\Sigma\in\mathcal{D}$,
\[
f(\Sigma)\ge f(\Sigma^\star)
+\underbrace{\langle\nabla f(\Sigma^\star),
\Sigma-\Sigma^\star\rangle}_{\ge0}
+\tfrac{m}{2}\|\Sigma-\Sigma^\star\|_F^2.
\]
Applying this at $\Sigma=\Sigma^{(M)}$ and using \eqref{eq:obj-rate} yields \eqref{eq:iter-rate}.
\hfill$\blacksquare$
\subsection{Proof of Lemma~\ref{lem:subproblem}}\label{app:subproblem}
\emph{(i)} First, every $\lambda>\|M\|_2/\mu$ is admissible, which shows $\lambda_{\mathrm{crit}}\le\|M\|_2/\mu$. Indeed, for such $\lambda$ the stage weight satisfies $\widetilde M(\lambda)\succeq(\lambda\mu-\|M\|_2)I\succ0$. The cost is then nonnegative, so backward induction gives $P_t(\lambda)\succeq0$ for all $t$, and consequently $S_t(\lambda)=\widetilde M^{uu}(\lambda)+B^\top P_{t+1}(\lambda)B\succ0$.

Second, admissibility is preserved as $\lambda$ increases. When $S_t(\lambda)\succ0$, the Riccati step \eqref{eq:riccati} is the partial minimization
\[
x^\top P_t(\lambda)x=\min_{u}\{z^\top\widetilde M(\lambda)z+(Ax+Bu)^\top P_{t+1}(\lambda)(Ax+Bu)\},
\]
and the objective on the right is pointwise nondecreasing in $\lambda$, since its derivative in $\lambda$ is $z^\top W^{(2)}z\ge0$, and in $P_{t+1}\succeq 0$. Let $\lambda_1\in\Lambda$ and $\lambda_2>\lambda_1$, and induct backward from $P_H(\lambda_1)=P_H(\lambda_2)=0$. If $P_{t+1}(\lambda_2)\succeq P_{t+1}(\lambda_1)$, then $S_t(\lambda_2)\succeq S_t(\lambda_1)+(\lambda_2-\lambda_1)W^{(2),uu}\succ0$, so the minimization at $\lambda_2$ is well posed, and minimizing the pointwise larger objective yields $P_t(\lambda_2)\succeq P_t(\lambda_1)$. Hence $\lambda_2\in\Lambda$.
Finally, wherever all $S_t(\lambda)\succ0$ the recursion depends continuously on $\lambda$, so $\Lambda$ is open and therefore of the form $(\lambda_{\mathrm{crit}},\infty)$. Since $S_{H-1}(\lambda)=M^{uu}+\lambda W^{(2),uu}\prec0$ for all sufficiently small $\lambda\ge0$, it follows that $\lambda_{\mathrm{crit}}>0$.

\emph{(ii)} For $\lambda\in\Lambda$, all $S_t(\lambda)\succ0$, so the Riccati recursion \eqref{eq:riccati} involves only sums, products, and inverses of matrices that depend continuously on $\lambda$. Hence the gains $K_t(\lambda)$, and therefore $b(\lambda)$ through the covariance propagation following \eqref{eq:budget-map}, are continuous in $\lambda$. For monotonicity, fix $\lambda_1,\lambda_2\in\Lambda$ and define $c_j\coloneqq\Tr\bigl(M\cov{\pi_{\lambda_j}}\bigr)$ and $b_j\coloneqq b(\lambda_j)$. The cost of $\pi_{\lambda_j}$ under $\widetilde M(\lambda_i)$ is $c_j+\lambda_i b_j$. Since $\pi_{\lambda_1}$ and $\pi_{\lambda_2}$ are optimal for $\lambda_1$ and $\lambda_2$, respectively,
\[
c_1+\lambda_1b_1\le c_2+\lambda_1b_2,
\qquad
c_2+\lambda_2b_2\le c_1+\lambda_2b_1.
\]
Adding the two inequalities and cancelling $c_1+c_2$ yields
$
\lambda_1b_1+\lambda_2b_2
\le\lambda_1b_2+\lambda_2b_1,
$
which is equivalent to $(\lambda_2-\lambda_1)(b_2-b_1)\le0$.
Thus, $\lambda_2>\lambda_1$ implies $b(\lambda_2)\le b(\lambda_1)$.

\emph{(iii)} For $\lambda\in\Lambda$, $\pi_\lambda$ attains
$v(\lambda)=\inf_{\pi\in\Pi}\Tr\bigl(\widetilde M(\lambda)\cov{\pi}\bigr)$. For any feasible $\pi$ in \eqref{eq:lmo-control},
\[
\Tr\bigl(M\cov{\pi}\bigr)
=\Tr\bigl(\widetilde M(\lambda)\cov{\pi}\bigr)
-\lambda\Tr\bigl(W^{(2)}\cov{\pi}\bigr)
\ge v(\lambda)-\lambda\beta.
\]
The inequality follows from the definition of $v(\lambda)$ and from feasibility, since $\lambda\ge0$ implies
$-\lambda\Tr(W^{(2)}\cov{\pi})\ge-\lambda\beta$.
Hence $p^\star\ge v(\lambda)-\lambda\beta$. Evaluating at $\pi_\lambda$ gives
\begin{multline*}
\Tr\bigl(M\cov{\pi_\lambda}\bigr)
=v(\lambda)-\lambda b(\lambda)
=\bigl(v(\lambda)-\lambda\beta\bigr)
+\lambda\bigl(\beta-b(\lambda)\bigr)\\
\le p^\star+\lambda\bigl(\beta-b(\lambda)\bigr).
\end{multline*}
If $b(\lambda)\le\beta$, then $\pi_\lambda$ is feasible.
If $b(\lambda)=\beta$, the displayed bound gives $\Tr(M\cov{\pi_\lambda})\le p^\star$. Feasibility gives the reverse inequality by the definition of $p^\star$; hence equality holds, and $\pi_\lambda$ solves \eqref{eq:lmo-control}.

\emph{(iv)} We first upper bound $v(\lambda)$. Let $\pi_e\in\argmin_{\pi\in\Pi}\Tr\bigl(W^{(2)}\cov{\pi}\bigr)$, so that $\Tr\bigl(W^{(2)}\cov{\pi_e}\bigr)=b_0$. Since $M\prec0$ and $\cov{\pi_e}\succeq0$, we have $\Tr\bigl(M\cov{\pi_e}\bigr)\le0$. Using $\pi_e$ as a candidate policy in the definition of $v(\lambda)$,
\[
v(\lambda)\le\Tr\bigl(M\cov{\pi_e}\bigr)+\lambda b_0\le\lambda b_0 .
\]
We next lower bound $v(\lambda)$ in terms of $b(\lambda)$. Any $\Sigma\succeq0$ satisfies $\Tr(M\Sigma)\ge-\|M\|_2\Tr(\Sigma)$ and $\Tr(\Sigma)\le\Tr\bigl(W^{(2)}\Sigma\bigr)/\mu$. Applying both bounds to $\cov{\pi_\lambda}$
\[
v(\lambda)=\Tr\bigl(M\cov{\pi_\lambda}\bigr)+\lambda b(\lambda)\ge\Bigl(\lambda-\tfrac{\|M\|_2}{\mu}\Bigr)b(\lambda).
\]
Now take $\lambda>\|M\|_2/\mu$, which is admissible by \textit{(i)}. Chaining the two bounds on $v(\lambda)$ and dividing by $\lambda-\|M\|_2/\mu>0$ yields
$
b(\lambda)\le\tfrac{\lambda b_0}{\lambda-\|M\|_2/\mu}.
$
The right hand side is at most $\beta$ if and only if $\lambda b_0\le\beta\lambda-\beta\|M\|_2/\mu$, which rearranges to $\lambda(\beta-b_0)\ge\beta\|M\|_2/\mu$. Since $\beta>b_0$ by Assumption~\ref{ass:standing}(ii), this holds exactly when $\lambda\ge\bar\lambda$. \hfill$\blacksquare$

\bibliographystyle{IEEEtran}
\bibliography{refs}

\end{document}

%% file: fw_objective_vs_beta.tex
\begin{tikzpicture}

\definecolor{crimson2143940}{RGB}{214,39,40}
\definecolor{darkgrey176}{RGB}{176,176,176}
\definecolor{darkorange25512714}{RGB}{255,127,14}
\definecolor{forestgreen4416044}{RGB}{44,160,44}
\definecolor{lightgrey204}{RGB}{204,204,204}
\definecolor{mediumpurple148103189}{RGB}{148,103,189}
\definecolor{sienna1408675}{RGB}{140,86,75}
\definecolor{steelblue31119180}{RGB}{31,119,180}

\begin{axis}[
	legend style={
		at={(0.5,1.08)},
		anchor=south,
		legend columns=3,
		draw=black,
		fill=white,
		font=\small
	},
	legend cell align={left},
	yticklabel style={text width=2.2em, align=right},
	log basis y={10},
	tick align=outside,
	tick pos=left,
	x grid style={darkgrey176},
	xlabel={Frank--Wolfe iteration},
	xmajorgrids,
	xmin=-2.5, xmax=52.5,
	xminorgrids,
	xtick style={color=black},
	y grid style={darkgrey176},
ylabel={Objective $\mathrm{Tr}(W^{(1)} \Sigma^{-1})$},
	ymajorgrids,
	ymin=0.00578731937763324, ymax=18.3434700105093,
	yminorgrids,
	ymode=log,
	height=4.5cm,
	width=8cm,
	ytick style={color=black},
	ytick={0.0001,0.001,0.01,0.1,1,10,100},
	yticklabels={
		$10^{-4}$,
		$10^{-3}$,
		$10^{-2}$,
		$10^{-1}$,
		$10^{0}$,
		$10^{1}$,
		$10^{2}$
	}
	]
\addplot [semithick, steelblue31119180, mark=*, mark size=2, mark options={solid}]
table {%
0 12.7158368350352
1 0.490714699539539
2 0.463458136175405
3 0.453354564548036
4 0.446621821387625
5 0.443353570836086
6 0.44158565321837
7 0.440521517244158
8 0.439828675618959
9 0.4393512348745
10 0.43900777540655
11 0.438752210325832
12 0.438556782469142
13 0.4384039245542
14 0.438282076704158
15 0.438183356325971
16 0.438102242805799
17 0.43803477816211
18 0.437978054605776
19 0.437929905855469
20 0.437888681811128
21 0.437853114452402
22 0.437822209949337
23 0.43779518960644
24 0.437771427090322
25 0.437750417452054
26 0.437731751090394
27 0.437715091528595
28 0.437700163083723
29 0.437686730772976
30 0.437674601819407
31 0.437663614915042
32 0.437653628844879
33 0.437644525155805
34 0.437636205267517
35 0.437628580169803
36 0.437621574509075
37 0.437615122816121
38 0.437609166932867
39 0.437603658765936
40 0.437598555508259
41 0.437593816870349
42 0.437589409054575
43 0.437585302391527
44 0.437581469143359
45 0.437577887334744
46 0.437574534386732
47 0.437571390920148
48 0.437568440441082
49 0.437565666443589
50 0.437563056829641
};
\addlegendentry{$\beta=20.0$}
\addplot [semithick, darkorange25512714, mark=*, mark size=2, mark options={solid}]
table {%
0 12.7158368350352
1 0.341674364307712
2 0.195545221766927
3 0.192528314721017
4 0.162324208474271
5 0.170045960467213
6 0.167831859571785
7 0.161071535288444
8 0.160084089419884
9 0.161115751925631
10 0.158265662119286
11 0.160060844678334
12 0.158256545429488
13 0.163306108980121
14 0.16058868156988
15 0.159174209746517
16 0.15826201190521
17 0.158062107083491
18 0.158011497906918
19 0.157861745230822
20 0.159221603245701
21 0.158239973665126
22 0.158061725270349
23 0.15770541121603
24 0.15784595904884
25 0.158013978810897
26 0.157646528630778
27 0.157710880324825
28 0.157651278291323
29 0.157601724645514
30 0.157753674173621
31 0.157537344343555
32 0.157702994403211
33 0.157544596084343
34 0.158365956554618
35 0.157975310460331
36 0.157754782342362
37 0.157581179192297
38 0.157560986576894
39 0.157540152697394
40 0.157529541778883
41 0.157770448467036
42 0.157561710512833
43 0.157561798270026
44 0.15748602976401
45 0.15753768517166
46 0.157657798021079
47 0.15752597007836
48 0.157521539368114
49 0.157465867402326
50 0.157499670028985
};
\addlegendentry{$\beta=50.0$}
\addplot [semithick, forestgreen4416044, mark=*, mark size=2, mark options={solid}]
table {%
0 12.7158368350352
1 0.314123126710525
2 0.15304867356937
3 0.0926150026699704
4 0.0875947984205825
5 0.0850672175217101
6 0.0836396963070882
7 0.0835818309696105
8 0.0892422215184939
9 0.0851581261331926
10 0.0827347493238472
11 0.0812507787116488
12 0.0809042980897075
13 0.0805409694343543
14 0.0810343592547921
15 0.0806103574770895
16 0.080718044071103
17 0.080562702987848
18 0.0819444374275772
19 0.0813098915777399
20 0.0805357221455517
21 0.080397741216808
22 0.0801346445408081
23 0.0802994810319118
24 0.0800831553523451
25 0.0812298809294623
26 0.080949699191002
27 0.0803841874301607
28 0.0802028803219003
29 0.0800731817444963
30 0.08012702695088
31 0.0800484654888634
32 0.0806703549474173
33 0.0804778532066339
34 0.0801719847400931
35 0.0800893352439207
36 0.079999554110224
37 0.0800561704479776
38 0.0799873353168143
39 0.0804481492142901
40 0.080318175245416
41 0.0800990296678014
42 0.0800323002653218
43 0.079970726454646
44 0.0800060957310955
45 0.0799618420790463
46 0.0802945259285089
47 0.0801961464901629
48 0.0800415364995159
49 0.0799950423130217
50 0.0799474697945485
};
\addlegendentry{$\beta=100.0$}
\addplot [semithick, crimson2143940, mark=*, mark size=2, mark options={solid}]
table {%
0 12.7158368350352
1 0.28057156668071
2 0.144257356393817
3 0.0662925369144625
4 0.0459479808686282
5 0.0431914691209027
6 0.0418340002439786
7 0.0412769582143555
8 0.0423795789629226
9 0.042111076515806
10 0.0413077955312062
11 0.0412607273577652
12 0.0409044313274533
13 0.0408174070797419
14 0.0407796947847059
15 0.0407097433839374
16 0.0419892342416723
17 0.041633754417988
18 0.0409723410348038
19 0.0407878305085597
20 0.040631709966091
21 0.0406830475312225
22 0.0406031577593872
23 0.0413224180796697
24 0.0411336019952365
25 0.040786303046841
26 0.0406771039743628
27 0.0405903536372011
28 0.0406186654578895
29 0.0405730719590205
30 0.0410136654669604
31 0.0408922942795507
32 0.0406888054329903
33 0.0406218749056128
34 0.0405658714981791
35 0.0405858019009373
36 0.0405548847812
37 0.0408582220733974
38 0.040774095411079
39 0.0406385608617523
40 0.0405918705061531
41 0.0405523755853261
42 0.0405658980239859
43 0.0405440520051881
44 0.0407658916608007
45 0.0407036373136627
46 0.0406072051389759
47 0.040572777192694
48 0.0405432489190845
49 0.0405528725077219
50 0.0405366053721917
};
\addlegendentry{$\beta=200.0$}
\addplot [semithick, mediumpurple148103189, mark=*, mark size=2, mark options={solid}]
table {%
0 12.7158368350352
1 0.226458418856089
2 0.0690758543317906
3 0.0297523397996977
4 0.0233180957255151
5 0.0177399370601745
6 0.017269759921577
7 0.0166383378531527
8 0.0168056228392551
9 0.0165081686523555
10 0.0169213474855995
11 0.0167781588883183
12 0.0166881413680013
13 0.0166293837539233
14 0.0165665744752246
15 0.0165036654032048
16 0.0164684689413247
17 0.016455426436781
18 0.0169368014061454
19 0.0167869513002598
20 0.0165714153930281
21 0.0164976451996484
22 0.016437251460454
23 0.0164425984330168
24 0.0164206286098386
25 0.0167019518985067
26 0.0166221044588785
27 0.0165006566157247
28 0.0164567329695683
29 0.0164214911486013
30 0.0164241040657559
31 0.0164109762316879
32 0.0165929857020076
33 0.0165403241171208
34 0.0164655716859558
35 0.01643638931621
36 0.016412981206725
37 0.0164140100320402
38 0.016405189161824
39 0.0165336368651816
40 0.0164959932092322
41 0.0164453795517378
42 0.0164244674301793
43 0.0164077215696248
44 0.0164081307793414
45 0.0164017858994212
46 0.0164981364908097
47 0.0164700797659769
48 0.016433530807139
49 0.0164179457498782
50 0.0164053195677535
};
\addlegendentry{$\beta=500.0$}
\addplot [semithick, sienna1408675, mark=*, mark size=2, mark options={solid}]
table {%
0 12.7158368350352
1 0.18941001664155
2 0.0449676327668598
3 0.0153475445609215
4 0.0118219116385636
5 0.00894642495135621
6 0.00869221970242919
7 0.00836542189403235
8 0.00844308962025676
9 0.00828941278488067
10 0.00855924551002416
11 0.00849278992095421
12 0.00838493542138241
13 0.00835038297284177
14 0.00830278229258562
15 0.00828206553516444
16 0.00857191157795366
17 0.0084702773822841
18 0.00833882162937031
19 0.00829601651269465
20 0.00825843839105963
21 0.00826456902699122
22 0.00825041855365574
23 0.00841805518035608
24 0.00836990310704927
25 0.00829745757640293
26 0.00827223119135556
27 0.00825133668088227
28 0.00825253941002375
29 0.00824514555900214
30 0.00835730726557398
31 0.00832584771408613
32 0.00828335517586527
33 0.00826676525622776
34 0.00825318342182892
35 0.00825356164642405
36 0.00824862606219355
37 0.00833088597634513
38 0.00830943983578795
39 0.00828116943833169
40 0.00826974968463009
41 0.00826017318415955
42 0.00826036328682223
43 0.00825628304792103
44 0.0083217270437722
45 0.00830587889238456
46 0.00828535826668587
47 0.0082765006143188
48 0.00826892939447576
49 0.00826832590212329
50 0.00826474284092094
};
\addlegendentry{$\beta=1000.0$}
\end{axis}

\end{tikzpicture}

%% file: system_id_error_mc.tex
\begin{tikzpicture}

\definecolor{crimson2143940}{RGB}{214,39,40}
\definecolor{darkgrey176}{RGB}{176,176,176}
\definecolor{darkorange25512714}{RGB}{255,127,14}
\definecolor{forestgreen4416044}{RGB}{44,160,44}
\definecolor{lightgrey204}{RGB}{204,204,204}
\definecolor{steelblue31119180}{RGB}{31,119,180}

\begin{axis}[
	legend style={
		at={(0.5,1.08)},
		anchor=south,
		legend columns=2,
		draw=black,
		fill=white,
		font=\small
	},
	legend cell align={left},
	yticklabel style={text width=2.2em, align=right},
	log basis y={10},
	tick align=outside,
	tick pos=left,
	x grid style={darkgrey176},
	xlabel={Episode},
	xmajorgrids,
	xmin=-1.4, xmax=51.4,
	xminorgrids,
	xtick style={color=black},
	y grid style={darkgrey176},
	ylabel={Error $\|[\hat A\ \hat B]-[A\ B]\|_F$},
	ymajorgrids,
	ymin=6.09885643429937e-06, ymax=24075.1627427264,
	yminorgrids,
	ymode=log,
	height=4.5cm,
	width=8cm,
	ytick style={color=black},
	ytick={1e-08,1e-06,0.0001,0.01,1,100,10000,1000000,100000000},
	yticklabels={
		$10^{-8}$,
		$10^{-6}$,
		$10^{-4}$,
		$10^{-2}$,
		$10^{0}$,
		$10^{2}$,
		$10^{4}$,
		$10^{6}$,
		$10^{8}$
	}
	]
\path [fill=steelblue31119180, fill opacity=0.22]
(axis cs:1,22556.4705014433)
--(axis cs:1,407.183720800482)
--(axis cs:2,0.115220748311312)
--(axis cs:3,0.000176385593996459)
--(axis cs:4,9.62000372177126e-05)
--(axis cs:5,1.44542583176946e-05)
--(axis cs:6,1.44539275117826e-05)
--(axis cs:7,1.44539951111114e-05)
--(axis cs:8,1.44544633019999e-05)
--(axis cs:9,1.44536467698097e-05)
--(axis cs:10,1.44541805651078e-05)
--(axis cs:11,1.44543759910938e-05)
--(axis cs:12,1.44540472725737e-05)
--(axis cs:13,1.44539496636045e-05)
--(axis cs:14,1.44529158251801e-05)
--(axis cs:15,1.44532136927243e-05)
--(axis cs:16,1.44532656410125e-05)
--(axis cs:17,1.44525674821723e-05)
--(axis cs:18,1.44526422000752e-05)
--(axis cs:19,1.44528271125633e-05)
--(axis cs:20,1.44534762849744e-05)
--(axis cs:21,1.44530049754854e-05)
--(axis cs:22,1.44528354351104e-05)
--(axis cs:23,1.44527065516006e-05)
--(axis cs:24,1.44533523615962e-05)
--(axis cs:25,1.44527818178065e-05)
--(axis cs:26,1.44521211003552e-05)
--(axis cs:27,1.44531609129398e-05)
--(axis cs:28,1.44532668265558e-05)
--(axis cs:29,1.44532533847805e-05)
--(axis cs:30,1.44533986094511e-05)
--(axis cs:31,1.44533279167193e-05)
--(axis cs:32,1.44537838061985e-05)
--(axis cs:33,1.44533906860786e-05)
--(axis cs:34,1.44541700464051e-05)
--(axis cs:35,1.44545251364541e-05)
--(axis cs:36,1.44547635551146e-05)
--(axis cs:37,1.44546174434352e-05)
--(axis cs:38,1.44556712993788e-05)
--(axis cs:39,1.4455234715654e-05)
--(axis cs:40,1.44557124308344e-05)
--(axis cs:41,1.44559769145183e-05)
--(axis cs:42,1.44565197351775e-05)
--(axis cs:43,1.4457097729109e-05)
--(axis cs:44,1.44567529475931e-05)
--(axis cs:45,1.44568805660111e-05)
--(axis cs:46,1.44573114370408e-05)
--(axis cs:47,1.44580594613111e-05)
--(axis cs:48,1.44574924454737e-05)
--(axis cs:49,1.44583110158075e-05)
--(axis cs:49,0.0131002112426105)
--(axis cs:49,0.0131002112426105)
--(axis cs:48,0.0133967832824658)
--(axis cs:47,0.0134019582509735)
--(axis cs:46,0.013572997112819)
--(axis cs:45,0.0139937384230433)
--(axis cs:44,0.013580822784668)
--(axis cs:43,0.0145050699207943)
--(axis cs:42,0.014223278175052)
--(axis cs:41,0.0139187549090937)
--(axis cs:40,0.012368248654509)
--(axis cs:39,0.012211382258274)
--(axis cs:38,0.0119096654781589)
--(axis cs:37,0.0116825967814306)
--(axis cs:36,0.0116985116025066)
--(axis cs:35,0.0112885032371303)
--(axis cs:34,0.0100682133455741)
--(axis cs:33,0.0105449094539479)
--(axis cs:32,0.00995659031811535)
--(axis cs:31,0.0130304982692401)
--(axis cs:30,0.0129897071484585)
--(axis cs:29,0.01284541015187)
--(axis cs:28,0.0128234703128984)
--(axis cs:27,0.0176878322000747)
--(axis cs:26,0.0173936590290431)
--(axis cs:25,0.0182221071968933)
--(axis cs:24,0.0186985626635954)
--(axis cs:23,0.020409518290214)
--(axis cs:22,0.01974323831238)
--(axis cs:21,0.0195462252554497)
--(axis cs:20,0.0167599814132092)
--(axis cs:19,0.0220935244255244)
--(axis cs:18,0.023031992155652)
--(axis cs:17,0.0196086984828925)
--(axis cs:16,0.0227575599755086)
--(axis cs:15,0.0279058407401953)
--(axis cs:14,0.0274965446995349)
--(axis cs:13,0.0299093417247008)
--(axis cs:12,0.0321947786847844)
--(axis cs:11,0.0361033191518233)
--(axis cs:10,0.048822034823175)
--(axis cs:9,0.055332805760498)
--(axis cs:8,0.0986831642303836)
--(axis cs:7,0.246148347501204)
--(axis cs:6,0.23490070252047)
--(axis cs:5,0.331671094825258)
--(axis cs:4,0.409650773077233)
--(axis cs:3,43.061470417886)
--(axis cs:2,333.522784816945)
--(axis cs:1,22556.4705014433)
--cycle;

\path [fill=darkorange25512714, fill opacity=0.22]
(axis cs:1,22556.4705014433)
--(axis cs:1,407.183720800482)
--(axis cs:2,34.2630806631708)
--(axis cs:3,1.35726682794714)
--(axis cs:4,0.0165664321609906)
--(axis cs:5,0.000592446632132264)
--(axis cs:6,0.000592203306467158)
--(axis cs:7,0.000592682066521383)
--(axis cs:8,0.000592917077608227)
--(axis cs:9,0.000593591328619602)
--(axis cs:10,0.000593858801904272)
--(axis cs:11,0.000594573042779465)
--(axis cs:12,0.000595376637281068)
--(axis cs:13,0.000595726686657545)
--(axis cs:14,0.000595346550758328)
--(axis cs:15,0.00059486848953296)
--(axis cs:16,0.000594516340583921)
--(axis cs:17,0.000594605518136663)
--(axis cs:18,0.000595019717705189)
--(axis cs:19,0.000594988992813435)
--(axis cs:20,0.000594970005680505)
--(axis cs:21,0.000594750796780858)
--(axis cs:22,0.000594599414510519)
--(axis cs:23,0.000595798019628315)
--(axis cs:24,0.000595782868614508)
--(axis cs:25,0.000595490084200659)
--(axis cs:26,0.000595377005652489)
--(axis cs:27,0.000594888127447096)
--(axis cs:28,0.000594542699726132)
--(axis cs:29,0.00059498597582295)
--(axis cs:30,0.000593989823341501)
--(axis cs:31,0.000594199578925321)
--(axis cs:32,0.000594169784149037)
--(axis cs:33,0.000593776747454885)
--(axis cs:34,0.000594137970735953)
--(axis cs:35,0.000594067440406416)
--(axis cs:36,0.00059484291903993)
--(axis cs:37,0.000594605500369366)
--(axis cs:38,0.000594888238084758)
--(axis cs:39,0.000594870326510039)
--(axis cs:40,0.000594902308128517)
--(axis cs:41,0.000595048164519782)
--(axis cs:42,0.000595080152900752)
--(axis cs:43,0.000594932979767767)
--(axis cs:44,0.000594788481930764)
--(axis cs:45,0.000595686295510761)
--(axis cs:46,0.000595661276980933)
--(axis cs:47,0.000595078381204571)
--(axis cs:48,0.000595436253635744)
--(axis cs:49,0.00059560673106352)
--(axis cs:49,0.219627397562889)
--(axis cs:49,0.219627397562889)
--(axis cs:48,0.216735785637893)
--(axis cs:47,0.208383527883006)
--(axis cs:46,0.21803618011961)
--(axis cs:45,0.220344703388591)
--(axis cs:44,0.225923099313119)
--(axis cs:43,0.241113363972789)
--(axis cs:42,0.247649920917315)
--(axis cs:41,0.255929378444236)
--(axis cs:40,0.263321026863391)
--(axis cs:39,0.260934477501778)
--(axis cs:38,0.244338508222805)
--(axis cs:37,0.252349666704092)
--(axis cs:36,0.250981956026984)
--(axis cs:35,0.254362421035475)
--(axis cs:34,0.254747405282846)
--(axis cs:33,0.28348535729557)
--(axis cs:32,0.264775005483491)
--(axis cs:31,0.270945862978866)
--(axis cs:30,0.268759636255436)
--(axis cs:29,0.261827322643912)
--(axis cs:28,0.277122735965296)
--(axis cs:27,0.268884354899952)
--(axis cs:26,0.262442894382814)
--(axis cs:25,0.270725037852359)
--(axis cs:24,0.259004181143591)
--(axis cs:23,0.266708493054659)
--(axis cs:22,0.287160654287914)
--(axis cs:21,0.270169449072553)
--(axis cs:20,0.268885965202372)
--(axis cs:19,0.308364185709337)
--(axis cs:18,0.294144556106007)
--(axis cs:17,0.276560467685606)
--(axis cs:16,0.304482253737488)
--(axis cs:15,0.373337197316497)
--(axis cs:14,0.353263052604707)
--(axis cs:13,0.32467446756472)
--(axis cs:12,0.333546002273005)
--(axis cs:11,0.352377335849737)
--(axis cs:10,0.351350235604434)
--(axis cs:9,0.415611192217914)
--(axis cs:8,0.450545758071496)
--(axis cs:7,0.462196965043269)
--(axis cs:6,0.432506158052721)
--(axis cs:5,1.07032317182482)
--(axis cs:4,38.6556855290496)
--(axis cs:3,381.522370244303)
--(axis cs:2,5563.7878561775)
--(axis cs:1,22556.4705014433)
--cycle;

\path [fill=forestgreen4416044, fill opacity=0.22]
(axis cs:1,22556.4705014433)
--(axis cs:1,407.183720800482)
--(axis cs:2,12.0629979457021)
--(axis cs:3,1.26102592670867)
--(axis cs:4,0.218527605372697)
--(axis cs:5,0.0781425433155865)
--(axis cs:6,0.089468708965216)
--(axis cs:7,0.0716831172201107)
--(axis cs:8,0.0524423322847326)
--(axis cs:9,0.0458996477246305)
--(axis cs:10,0.0191735527155294)
--(axis cs:11,0.0174744929173153)
--(axis cs:12,0.0221911055132578)
--(axis cs:13,0.0322831943544374)
--(axis cs:14,0.0405391313206468)
--(axis cs:15,0.0319829227568918)
--(axis cs:16,0.027375508954007)
--(axis cs:17,0.0245786615627951)
--(axis cs:18,0.0202489949552552)
--(axis cs:19,0.0263563723571451)
--(axis cs:20,0.0263449275695527)
--(axis cs:21,0.0132030259485203)
--(axis cs:22,0.0217220427557147)
--(axis cs:23,0.0330375921787195)
--(axis cs:24,0.02727542590534)
--(axis cs:25,0.0238251576479238)
--(axis cs:26,0.0278990141763631)
--(axis cs:27,0.0199572424693074)
--(axis cs:28,0.0275334554814292)
--(axis cs:29,0.0280520141821473)
--(axis cs:30,0.0185279015284205)
--(axis cs:31,0.0194610984450682)
--(axis cs:32,0.0162524596256366)
--(axis cs:33,0.0127588761896759)
--(axis cs:34,0.0179890574397234)
--(axis cs:35,0.0211699301506032)
--(axis cs:36,0.0182130749665644)
--(axis cs:37,0.025689075314598)
--(axis cs:38,0.0243442840777025)
--(axis cs:39,0.0227724938040263)
--(axis cs:40,0.0213680915696914)
--(axis cs:41,0.0182134532350441)
--(axis cs:42,0.0149960797948583)
--(axis cs:43,0.0129571546163445)
--(axis cs:44,0.0064465437686357)
--(axis cs:45,0.00545784226880658)
--(axis cs:46,0.0101074351183937)
--(axis cs:47,0.00913755343946641)
--(axis cs:48,0.00810339525392629)
--(axis cs:49,0.0107930056092254)
--(axis cs:49,0.152786280855003)
--(axis cs:49,0.152786280855003)
--(axis cs:48,0.154033562619983)
--(axis cs:47,0.151203674671413)
--(axis cs:46,0.146046017682959)
--(axis cs:45,0.13938317082686)
--(axis cs:44,0.143552465224724)
--(axis cs:43,0.146234072032793)
--(axis cs:42,0.145365483340894)
--(axis cs:41,0.154260817775359)
--(axis cs:40,0.146332025024531)
--(axis cs:39,0.14246955496666)
--(axis cs:38,0.136594273506606)
--(axis cs:37,0.136695408384166)
--(axis cs:36,0.135157626464279)
--(axis cs:35,0.152913352896439)
--(axis cs:34,0.170413517279249)
--(axis cs:33,0.165018895352243)
--(axis cs:32,0.170283001344083)
--(axis cs:31,0.173939022813556)
--(axis cs:30,0.169414339502497)
--(axis cs:29,0.178312326348694)
--(axis cs:28,0.168161729118497)
--(axis cs:27,0.154343797534332)
--(axis cs:26,0.171247000203792)
--(axis cs:25,0.194393762864453)
--(axis cs:24,0.19047571350833)
--(axis cs:23,0.184265734525783)
--(axis cs:22,0.175722625808076)
--(axis cs:21,0.162305734992599)
--(axis cs:20,0.169611061651742)
--(axis cs:19,0.167351438555673)
--(axis cs:18,0.189935267097294)
--(axis cs:17,0.225194823281541)
--(axis cs:16,0.26668289213485)
--(axis cs:15,0.260585527439692)
--(axis cs:14,0.218210430508212)
--(axis cs:13,0.268540219641699)
--(axis cs:12,0.339128573715467)
--(axis cs:11,0.359833435576198)
--(axis cs:10,0.417392396841173)
--(axis cs:9,0.815446899983506)
--(axis cs:8,1.31566041359482)
--(axis cs:7,3.82908084131544)
--(axis cs:6,51.0286106234076)
--(axis cs:5,155.633524442213)
--(axis cs:4,906.140431626747)
--(axis cs:3,1722.01212800394)
--(axis cs:2,8398.19488952522)
--(axis cs:1,22556.4705014433)
--cycle;

\path [fill=crimson2143940, fill opacity=0.22]
(axis cs:1,22556.4705014433)
--(axis cs:1,407.183720800482)
--(axis cs:2,9.6203027762131)
--(axis cs:3,0.234365863970734)
--(axis cs:4,0.0628175258522219)
--(axis cs:5,0.0564412588212595)
--(axis cs:6,0.0472428425897344)
--(axis cs:7,0.0424267900894973)
--(axis cs:8,0.0399052377281693)
--(axis cs:9,0.0177757821268643)
--(axis cs:10,0.0370865954793797)
--(axis cs:11,0.0299919976563303)
--(axis cs:12,0.0255855710865028)
--(axis cs:13,0.0203275882614264)
--(axis cs:14,0.0140141591745965)
--(axis cs:15,0.00788654757402403)
--(axis cs:16,0.016826192186336)
--(axis cs:17,0.0185341710929999)
--(axis cs:18,0.0192382972210841)
--(axis cs:19,0.016967778371246)
--(axis cs:20,0.0111632556997884)
--(axis cs:21,0.020448323189983)
--(axis cs:22,0.0177404273533847)
--(axis cs:23,0.0181544918868037)
--(axis cs:24,0.0245722494040585)
--(axis cs:25,0.0222245938636368)
--(axis cs:26,0.0189596977044469)
--(axis cs:27,0.0178993390893906)
--(axis cs:28,0.0123841287391404)
--(axis cs:29,0.0159349853516892)
--(axis cs:30,0.0111749800146599)
--(axis cs:31,0.00919652793911135)
--(axis cs:32,0.013594925380265)
--(axis cs:33,0.015011111725715)
--(axis cs:34,0.0119000201398242)
--(axis cs:35,0.0121030728341571)
--(axis cs:36,0.0101756319574685)
--(axis cs:37,0.00765037105728042)
--(axis cs:38,0.0106838839650206)
--(axis cs:39,0.0102254059580001)
--(axis cs:40,0.0099065816132933)
--(axis cs:41,0.0102521929002533)
--(axis cs:42,0.0104755511834668)
--(axis cs:43,0.00745738211426573)
--(axis cs:44,0.0107581998642706)
--(axis cs:45,0.00952738336614373)
--(axis cs:46,0.00712443417798505)
--(axis cs:47,0.00631485142576126)
--(axis cs:48,0.00788846337062857)
--(axis cs:49,0.00852634213062618)
--(axis cs:49,0.0614528855997983)
--(axis cs:49,0.0614528855997983)
--(axis cs:48,0.0699725199297147)
--(axis cs:47,0.0681332737768378)
--(axis cs:46,0.0714531849704481)
--(axis cs:45,0.0739157832377567)
--(axis cs:44,0.0803042591440448)
--(axis cs:43,0.0805742855367958)
--(axis cs:42,0.0838147067728047)
--(axis cs:41,0.0862844307513022)
--(axis cs:40,0.0802304129752551)
--(axis cs:39,0.0868517599131418)
--(axis cs:38,0.0847623717058588)
--(axis cs:37,0.0829628489661172)
--(axis cs:36,0.0868354335736454)
--(axis cs:35,0.0778457763218306)
--(axis cs:34,0.0773758072355336)
--(axis cs:33,0.0801884806014874)
--(axis cs:32,0.0839344308449178)
--(axis cs:31,0.0878317708958321)
--(axis cs:30,0.0756887855180687)
--(axis cs:29,0.0754672009378663)
--(axis cs:28,0.0765534978623381)
--(axis cs:27,0.0826246222151288)
--(axis cs:26,0.080627007603795)
--(axis cs:25,0.0877139475106038)
--(axis cs:24,0.094459492652428)
--(axis cs:23,0.0916636149131164)
--(axis cs:22,0.096790409703012)
--(axis cs:21,0.0857305432789539)
--(axis cs:20,0.0970652590781142)
--(axis cs:19,0.105579099707369)
--(axis cs:18,0.116940095400781)
--(axis cs:17,0.118565667546504)
--(axis cs:16,0.125771381886615)
--(axis cs:15,0.149573306445826)
--(axis cs:14,0.149076821858759)
--(axis cs:13,0.157936539269829)
--(axis cs:12,0.139226734740834)
--(axis cs:11,0.147487184099233)
--(axis cs:10,0.163191812643162)
--(axis cs:9,0.177306030272336)
--(axis cs:8,0.16127065059564)
--(axis cs:7,0.215492412571458)
--(axis cs:6,0.370015239966938)
--(axis cs:5,1.56018079859639)
--(axis cs:4,22.0292385256266)
--(axis cs:3,211.221652500579)
--(axis cs:2,3212.94522252601)
--(axis cs:1,22556.4705014433)
--cycle;

\addplot [thick, steelblue31119180, mark=*, mark size=2, mark options={solid}]
table {%
1 4423.5677045818
2 0.802515219249607
3 0.090455714302991
4 0.0317217902888794
5 0.00922314548821395
6 0.00447246129819753
7 0.00401420769993951
8 0.00446871307159496
9 0.00355283827145312
10 0.00352238272032726
11 0.00354395757611231
12 0.00353897406864333
13 0.00353679847737502
14 0.0031476121368035
15 0.003106489804214
16 0.00311853192470506
17 0.00305452684811807
18 0.0030282168460771
19 0.00303069487452806
20 0.00312668024279021
21 0.00285753323503117
22 0.00283702260579649
23 0.00283851678113471
24 0.00292133266963392
25 0.00295968687995049
26 0.00297491685397304
27 0.00292061497961954
28 0.00301443368705065
29 0.00274256561916729
30 0.00301468634580451
31 0.00313644572022061
32 0.00313559009832094
33 0.00313332715910941
34 0.00313141958396849
35 0.0030680811303355
36 0.00275219112409226
37 0.00299453811489588
38 0.0031008472871239
39 0.00314179268084823
40 0.00308915271802873
41 0.00314581998398154
42 0.00314309533916239
43 0.00314526585518279
44 0.00314295952818747
45 0.00314278480855478
46 0.00309722277958994
47 0.00313082922919142
48 0.00307764023366969
49 0.002725163741167
};
\addlegendentry{Frank-Wolfe}
\addplot [thick, darkorange25512714, mark=*, mark size=2, mark options={solid}]
table {%
1 4423.5677045818
2 297.52346083078
3 19.9870667363263
4 0.767833078616538
5 0.192886248311881
6 0.131202946288508
7 0.129372386870281
8 0.138663741660444
9 0.101596786903366
10 0.0901582948443954
11 0.0927633851813182
12 0.0918523032966616
13 0.0880224797462466
14 0.0890945877024469
15 0.0869315739914978
16 0.0830676247269735
17 0.082398314290204
18 0.0793874584736308
19 0.0744169484756212
20 0.0742795924032663
21 0.0850123337526946
22 0.0894680714530348
23 0.0837163590497704
24 0.0836044813713544
25 0.0782317164807415
26 0.0798213332934788
27 0.0798465326212608
28 0.0759563453746821
29 0.0747404967483141
30 0.0710000412529947
31 0.0739915762455713
32 0.0737275467408978
33 0.0752815224084023
34 0.0743373153548336
35 0.0710430151014774
36 0.0733133859937323
37 0.0748795286963088
38 0.0748128398645296
39 0.0757019450150006
40 0.0770954840594173
41 0.0730678018217843
42 0.071560083152599
43 0.0673573159370638
44 0.0640493385606207
45 0.062904325726746
46 0.0639118085166376
47 0.0641134809633657
48 0.0635958201802944
49 0.0610490146638474
};
\addlegendentry{Certainty equivalence}
\addplot [thick, forestgreen4416044, mark=*, mark size=2, mark options={solid}]
table {%
1 4423.5677045818
2 618.684346707564
3 93.905124949469
4 13.6513806194273
5 3.12967140524065
6 0.562145584658519
7 0.25103169830238
8 0.174191632360288
9 0.150151464367199
10 0.140435638984922
11 0.106214670553337
12 0.100618487024824
13 0.0975504338001959
14 0.100416941427565
15 0.0918070414596397
16 0.0832832312839729
17 0.0769545912525383
18 0.0721214956668451
19 0.0636735063889708
20 0.0607306678911523
21 0.0670365634776866
22 0.0639504538955669
23 0.069856813485365
24 0.0685680758289616
25 0.0684639046499737
26 0.065995174952554
27 0.0595138002419472
28 0.0657086585971623
29 0.0609969118844904
30 0.0625461173148127
31 0.0551183834674314
32 0.0615468845649288
33 0.0554387211728838
34 0.0535725254773305
35 0.0485407514499697
36 0.0537030663504195
37 0.0542887020634846
38 0.0563328925366229
39 0.0553874660348117
40 0.0487458085973411
41 0.0520050067752625
42 0.0493135326281609
43 0.0464489272064491
44 0.0491275525242879
45 0.0526873530852557
46 0.0507182012023384
47 0.0465052661884147
48 0.0481657035041588
49 0.0480945553399098
};
\addlegendentry{Naive exploration}
\addplot [thick, crimson2143940, mark=*, mark size=2, mark options={solid}]
table {%
1 4423.5677045818
2 217.710211031696
3 10.7565929538293
4 0.642002504627826
5 0.143300175555298
6 0.116840110790837
7 0.0994518471167743
8 0.0856539665100032
9 0.0700707709898022
10 0.0656707391914755
11 0.0659088083028319
12 0.0686767147287812
13 0.0600967302480754
14 0.0576020561732986
15 0.0593643830468706
16 0.0538404607858313
17 0.0494801452180155
18 0.0458632401516566
19 0.0421858630292564
20 0.0437795393096511
21 0.0398996065337234
22 0.0378828464829734
23 0.038653298902849
24 0.0402250305679907
25 0.0362123483143411
26 0.0368057924275744
27 0.0349058939828343
28 0.0344454820782023
29 0.0355107058222609
30 0.0329065402273989
31 0.0330134669542929
32 0.0318602898102894
33 0.0329405715788392
34 0.0314796735992413
35 0.0325901026418791
36 0.0308531013657797
37 0.0325892739614378
38 0.0322555164927808
39 0.0322036371568365
40 0.0333781853917243
41 0.0315400437939324
42 0.0316300599140753
43 0.0296242457914295
44 0.0295795276265794
45 0.028414045279546
46 0.0281999041281452
47 0.0285738492156835
48 0.0288976705995582
49 0.0271229732659293
};
\addlegendentry{Frequency-based}
\end{axis}

\end{tikzpicture}

%% file: system_id_cost_mc.tex
\begin{tikzpicture}

\definecolor{crimson2143940}{RGB}{214,39,40}
\definecolor{darkgrey176}{RGB}{176,176,176}
\definecolor{darkorange25512714}{RGB}{255,127,14}
\definecolor{forestgreen4416044}{RGB}{44,160,44}
\definecolor{lightgrey204}{RGB}{204,204,204}
\definecolor{steelblue31119180}{RGB}{31,119,180}

\begin{axis}[
 legend style={
        at={(0.5,1.08)},
        anchor=south,
        legend columns=2,
        draw=black,
        fill=white,
        font=\small
    },
    legend cell align={left},
yticklabel style={text width=2.2em, align=right},
log basis y={10},
tick align=outside,
tick pos=left,
x grid style={darkgrey176},
xlabel={Episode},
xmajorgrids,
xmin=-1.4, xmax=51.4,
xminorgrids,
xtick style={color=black},
y grid style={darkgrey176},
height=4.5cm,
width=8cm,
ylabel={Objective $\operatorname{Tr}((\Sigma^{(k)})^{-1})$},
ymajorgrids,
ymin=1.61280243356402e-10, ymax=31055453.8398806,
yminorgrids,
ymode=log,
ytick style={color=black},
ytick={1e-12,1e-10,1e-08,1e-06,0.0001,0.01,1,100,10000,1000000,100000000,10000000000},
yticklabels={
  $10^{-12}$,
  $10^{-10}$,
  $10^{-8}$,
  $10^{-6}$,
  $10^{-4}$,
  $10^{-2}$,
  $10^{0}$,
  $10^{2}$,
  $10^{4}$,
  $10^{6}$,
  $10^{8}$,
  $10^{10}$
}
]
\path [fill=steelblue31119180, fill opacity=0.22]
(axis cs:1,50017.0570902689)
--(axis cs:1,0.0124824056107091)
--(axis cs:2,1.37158434594605e-08)
--(axis cs:3,1.09252094153225e-08)
--(axis cs:4,7.66800012989732e-10)
--(axis cs:5,9.20159486572354e-10)
--(axis cs:6,1.0735193994777e-09)
--(axis cs:7,1.22687931216407e-09)
--(axis cs:8,1.38023922294368e-09)
--(axis cs:9,1.53359913285762e-09)
--(axis cs:10,1.68695904247541e-09)
--(axis cs:11,1.8403189499961e-09)
--(axis cs:12,1.99367885833949e-09)
--(axis cs:13,2.1470387635824e-09)
--(axis cs:14,2.30039867079237e-09)
--(axis cs:15,2.45375857786151e-09)
--(axis cs:16,2.60711848470715e-09)
--(axis cs:17,2.76047839070479e-09)
--(axis cs:18,2.91383829642466e-09)
--(axis cs:19,3.06719820325714e-09)
--(axis cs:20,3.22055810349951e-09)
--(axis cs:21,3.3739180058555e-09)
--(axis cs:22,3.52727790984154e-09)
--(axis cs:23,3.68063780846448e-09)
--(axis cs:24,3.83399771150658e-09)
--(axis cs:25,3.9873576073462e-09)
--(axis cs:26,4.14071750682373e-09)
--(axis cs:27,4.29407740922662e-09)
--(axis cs:28,4.44743731112837e-09)
--(axis cs:29,4.60079721514906e-09)
--(axis cs:30,4.75415711534765e-09)
--(axis cs:31,4.90751701489359e-09)
--(axis cs:32,5.06087691223567e-09)
--(axis cs:33,5.21423681148531e-09)
--(axis cs:34,5.36759670706317e-09)
--(axis cs:35,5.52095659939094e-09)
--(axis cs:36,5.67431649586812e-09)
--(axis cs:37,5.82767638816077e-09)
--(axis cs:38,5.98103627736593e-09)
--(axis cs:39,6.13439615714783e-09)
--(axis cs:40,6.28775605584354e-09)
--(axis cs:41,6.44111595109066e-09)
--(axis cs:42,6.59447584426896e-09)
--(axis cs:43,6.74783573512295e-09)
--(axis cs:44,6.90119561635581e-09)
--(axis cs:45,7.05455551322534e-09)
--(axis cs:46,7.20791539852273e-09)
--(axis cs:47,7.36127528358307e-09)
--(axis cs:48,7.51463516282038e-09)
--(axis cs:49,7.66799505182395e-09)
--(axis cs:49,0.00122072676173654)
--(axis cs:49,0.00122072676173654)
--(axis cs:48,0.0012018180500137)
--(axis cs:47,0.00118330263104699)
--(axis cs:46,0.00116016233306891)
--(axis cs:45,0.00114729364029158)
--(axis cs:44,0.00116718353625313)
--(axis cs:43,0.0012032595768979)
--(axis cs:42,0.00123298095532783)
--(axis cs:41,0.00120871052898642)
--(axis cs:40,0.00119812411909815)
--(axis cs:39,0.00121772469349366)
--(axis cs:38,0.00121752107230906)
--(axis cs:37,0.00120467668933255)
--(axis cs:36,0.00117904090581859)
--(axis cs:35,0.00115030338779363)
--(axis cs:34,0.00112538414749276)
--(axis cs:33,0.00112061512201682)
--(axis cs:32,0.00112908682949272)
--(axis cs:31,0.00111098828300046)
--(axis cs:30,0.00111046317047803)
--(axis cs:29,0.00107697749553587)
--(axis cs:28,0.00107268668274715)
--(axis cs:27,0.00114495928618958)
--(axis cs:26,0.0013344007821542)
--(axis cs:25,0.00129654568141993)
--(axis cs:24,0.00127613442145623)
--(axis cs:23,0.00126962831140694)
--(axis cs:22,0.00129438602893737)
--(axis cs:21,0.00131975819268243)
--(axis cs:20,0.00137583042076577)
--(axis cs:19,0.00135288589011221)
--(axis cs:18,0.00152908738755834)
--(axis cs:17,0.00192018640221168)
--(axis cs:16,0.00203916494455734)
--(axis cs:15,0.00230261319109766)
--(axis cs:14,0.00239437088417222)
--(axis cs:13,0.00248668768321892)
--(axis cs:12,0.00234290717926231)
--(axis cs:11,0.00329736190478854)
--(axis cs:10,0.00452240652671546)
--(axis cs:9,0.00699589485594744)
--(axis cs:8,0.00945148801209277)
--(axis cs:7,0.0144098957846756)
--(axis cs:6,0.0231248464864773)
--(axis cs:5,0.0216428972375674)
--(axis cs:4,0.0340921301919517)
--(axis cs:3,0.102744437779947)
--(axis cs:2,625.16928209653)
--(axis cs:1,50017.0570902689)
--cycle;

\path [fill=darkorange25512714, fill opacity=0.22]
(axis cs:1,5250176.04145306)
--(axis cs:1,1539.3105406321)
--(axis cs:2,3.08295090675373)
--(axis cs:3,0.00160481433029423)
--(axis cs:4,8.26597624587392e-07)
--(axis cs:5,9.9191645241571e-07)
--(axis cs:6,1.15723367805047e-06)
--(axis cs:7,1.32255029782433e-06)
--(axis cs:8,1.48786840717726e-06)
--(axis cs:9,1.65318499337803e-06)
--(axis cs:10,1.81850125980978e-06)
--(axis cs:11,1.98381645089942e-06)
--(axis cs:12,2.14913281614272e-06)
--(axis cs:13,2.3144473281753e-06)
--(axis cs:14,2.47976128922924e-06)
--(axis cs:15,2.6450749098012e-06)
--(axis cs:16,2.81039091589072e-06)
--(axis cs:17,2.97570183997714e-06)
--(axis cs:18,3.14101120097011e-06)
--(axis cs:19,3.30632283744947e-06)
--(axis cs:20,3.47162994631907e-06)
--(axis cs:21,3.63694306950696e-06)
--(axis cs:22,3.80225409124798e-06)
--(axis cs:23,3.96756215829016e-06)
--(axis cs:24,4.13287312376307e-06)
--(axis cs:25,4.29817564350527e-06)
--(axis cs:26,4.4634855952835e-06)
--(axis cs:27,4.62879549017815e-06)
--(axis cs:28,4.79410514405078e-06)
--(axis cs:29,4.95941349100158e-06)
--(axis cs:30,5.12472198525038e-06)
--(axis cs:31,5.29002970910781e-06)
--(axis cs:32,5.45533347281417e-06)
--(axis cs:33,5.62064023912192e-06)
--(axis cs:34,5.78594210145117e-06)
--(axis cs:35,5.95124977344484e-06)
--(axis cs:36,6.11655303932647e-06)
--(axis cs:37,6.2818561928812e-06)
--(axis cs:38,6.44716249102688e-06)
--(axis cs:39,6.61245923981209e-06)
--(axis cs:40,6.77776759459534e-06)
--(axis cs:41,6.94307300829538e-06)
--(axis cs:42,7.10838096706705e-06)
--(axis cs:43,7.27367749562921e-06)
--(axis cs:44,7.43897757712994e-06)
--(axis cs:45,7.6042806626447e-06)
--(axis cs:46,7.76958083945955e-06)
--(axis cs:47,7.93488535149969e-06)
--(axis cs:48,8.10019082396594e-06)
--(axis cs:49,8.26549292675237e-06)
--(axis cs:49,0.2048908534813)
--(axis cs:49,0.2048908534813)
--(axis cs:48,0.205847711737532)
--(axis cs:47,0.204963591398672)
--(axis cs:46,0.201633323721067)
--(axis cs:45,0.199405383993085)
--(axis cs:44,0.195940835235183)
--(axis cs:43,0.194699868271962)
--(axis cs:42,0.196454154352906)
--(axis cs:41,0.196932433134759)
--(axis cs:40,0.193845145594517)
--(axis cs:39,0.190882433739528)
--(axis cs:38,0.188101660198002)
--(axis cs:37,0.18486422653048)
--(axis cs:36,0.181114753980323)
--(axis cs:35,0.181020895837926)
--(axis cs:34,0.182869421154537)
--(axis cs:33,0.178508406917814)
--(axis cs:32,0.177231821557031)
--(axis cs:31,0.173430139260399)
--(axis cs:30,0.173331089355177)
--(axis cs:29,0.170931266892235)
--(axis cs:28,0.168367287844292)
--(axis cs:27,0.166925159524331)
--(axis cs:26,0.163935406594053)
--(axis cs:25,0.161652622324157)
--(axis cs:24,0.157155749812123)
--(axis cs:23,0.152647467604724)
--(axis cs:22,0.150379770299458)
--(axis cs:21,0.150205506266773)
--(axis cs:20,0.147591691895458)
--(axis cs:19,0.144741620976551)
--(axis cs:18,0.139618997213487)
--(axis cs:17,0.14110297068345)
--(axis cs:16,0.13634696617193)
--(axis cs:15,0.132331839459392)
--(axis cs:14,0.127514774003657)
--(axis cs:13,0.122238466503615)
--(axis cs:12,0.118228366572648)
--(axis cs:11,0.113439820802865)
--(axis cs:10,0.124013954228983)
--(axis cs:9,0.12372982987402)
--(axis cs:8,0.126110211394228)
--(axis cs:7,0.125093643892505)
--(axis cs:6,0.163244439803291)
--(axis cs:5,0.486408103059589)
--(axis cs:4,6.31631475921521)
--(axis cs:3,850.94886810022)
--(axis cs:2,885136.262329497)
--(axis cs:1,5250176.04145306)
--cycle;

\path [fill=forestgreen4416044, fill opacity=0.22]
(axis cs:1,16265411.6092466)
--(axis cs:1,355.718308656739)
--(axis cs:2,3.38524048437722)
--(axis cs:3,0.10286047633159)
--(axis cs:4,0.0512709700526406)
--(axis cs:5,0.0474000518173553)
--(axis cs:6,0.0415207856373012)
--(axis cs:7,0.0345015660409152)
--(axis cs:8,0.0286107915405438)
--(axis cs:9,0.028805793622096)
--(axis cs:10,0.0286128934503853)
--(axis cs:11,0.0273781842205386)
--(axis cs:12,0.0252025408241166)
--(axis cs:13,0.0249685870284794)
--(axis cs:14,0.025520000083866)
--(axis cs:15,0.025790420547472)
--(axis cs:16,0.0247057390596526)
--(axis cs:17,0.0247101519411107)
--(axis cs:18,0.024378515388963)
--(axis cs:19,0.0248018664493342)
--(axis cs:20,0.0247825728316765)
--(axis cs:21,0.0244738713736087)
--(axis cs:22,0.0246662669303822)
--(axis cs:23,0.0250235138345025)
--(axis cs:24,0.0252890048228706)
--(axis cs:25,0.0256274075115006)
--(axis cs:26,0.0257561555684898)
--(axis cs:27,0.0256103662835175)
--(axis cs:28,0.0255039583034009)
--(axis cs:29,0.0254810039310617)
--(axis cs:30,0.0247638942493463)
--(axis cs:31,0.0248466920646156)
--(axis cs:32,0.025040556752017)
--(axis cs:33,0.0248013455382493)
--(axis cs:34,0.0246352307823662)
--(axis cs:35,0.0244531968608384)
--(axis cs:36,0.0245097473265062)
--(axis cs:37,0.0247678643710211)
--(axis cs:38,0.0248880220048134)
--(axis cs:39,0.0249944936897383)
--(axis cs:40,0.024962114648674)
--(axis cs:41,0.0251140988358305)
--(axis cs:42,0.0252609583389643)
--(axis cs:43,0.0252888272647413)
--(axis cs:44,0.025539732114616)
--(axis cs:45,0.0257417010464625)
--(axis cs:46,0.0257679929965904)
--(axis cs:47,0.025890559956468)
--(axis cs:48,0.0259730173863784)
--(axis cs:49,0.0258597330328244)
--(axis cs:49,0.0620743342001591)
--(axis cs:49,0.0620743342001591)
--(axis cs:48,0.0625110299422145)
--(axis cs:47,0.0636861686824507)
--(axis cs:46,0.0646208212867419)
--(axis cs:45,0.0659211355995382)
--(axis cs:44,0.0677550140443066)
--(axis cs:43,0.0684045099875968)
--(axis cs:42,0.0681060860514997)
--(axis cs:41,0.0680575850274266)
--(axis cs:40,0.0672784300171852)
--(axis cs:39,0.0670172627640752)
--(axis cs:38,0.0680271280796703)
--(axis cs:37,0.0681723459763609)
--(axis cs:36,0.0681377944623055)
--(axis cs:35,0.0687113699500238)
--(axis cs:34,0.0695387308430691)
--(axis cs:33,0.0713257075640329)
--(axis cs:32,0.0720733480144005)
--(axis cs:31,0.073140780487712)
--(axis cs:30,0.0728395532935244)
--(axis cs:29,0.073043178068463)
--(axis cs:28,0.0732896852054937)
--(axis cs:27,0.0757529748831215)
--(axis cs:26,0.0786333103155572)
--(axis cs:25,0.0822346995837322)
--(axis cs:24,0.0859568084303886)
--(axis cs:23,0.0915961852609153)
--(axis cs:22,0.0912624883030494)
--(axis cs:21,0.092879609939598)
--(axis cs:20,0.105567365986803)
--(axis cs:19,0.109165121531604)
--(axis cs:18,0.111434098989983)
--(axis cs:17,0.125368162178722)
--(axis cs:16,0.136145439970022)
--(axis cs:15,0.140391515681559)
--(axis cs:14,0.143791867991084)
--(axis cs:13,0.149060767882534)
--(axis cs:12,0.152724132569926)
--(axis cs:11,0.186033388774039)
--(axis cs:10,0.250994094949606)
--(axis cs:9,0.474679827118701)
--(axis cs:8,0.753060094039324)
--(axis cs:7,6.50312935324847)
--(axis cs:6,752.12590151464)
--(axis cs:5,3800.05056818083)
--(axis cs:4,33953.8170543493)
--(axis cs:3,798571.107104835)
--(axis cs:2,1413996.39328306)
--(axis cs:1,16265411.6092466)
--cycle;

\path [fill=crimson2143940, fill opacity=0.22]
(axis cs:1,16158506.4636865)
--(axis cs:1,97.8171547746747)
--(axis cs:2,0.0940487744658667)
--(axis cs:3,0.0164706912738156)
--(axis cs:4,0.0133960874336142)
--(axis cs:5,0.00922410780143532)
--(axis cs:6,0.00709793602139739)
--(axis cs:7,0.00670302272826322)
--(axis cs:8,0.00648186198494931)
--(axis cs:9,0.0064079769738157)
--(axis cs:10,0.00645715736719929)
--(axis cs:11,0.00658916795742855)
--(axis cs:12,0.00669553562647488)
--(axis cs:13,0.00688218367075467)
--(axis cs:14,0.00700665918525969)
--(axis cs:15,0.00716862706241525)
--(axis cs:16,0.00719840735227867)
--(axis cs:17,0.00721640114833516)
--(axis cs:18,0.00724807422594106)
--(axis cs:19,0.00730168576930837)
--(axis cs:20,0.00737943657156818)
--(axis cs:21,0.00749481327693678)
--(axis cs:22,0.00755422988378294)
--(axis cs:23,0.00755150019263013)
--(axis cs:24,0.00760598634222555)
--(axis cs:25,0.00760779310845078)
--(axis cs:26,0.00763103500705818)
--(axis cs:27,0.00769905717029502)
--(axis cs:28,0.00770736769504113)
--(axis cs:29,0.00772250871079441)
--(axis cs:30,0.00777286126175572)
--(axis cs:31,0.00776427485063402)
--(axis cs:32,0.00773988173693322)
--(axis cs:33,0.00771771628422536)
--(axis cs:34,0.00775652488649569)
--(axis cs:35,0.00776922811053311)
--(axis cs:36,0.00776305171056735)
--(axis cs:37,0.00775844168772086)
--(axis cs:38,0.00777843521033406)
--(axis cs:39,0.00777902925480202)
--(axis cs:40,0.00778190667187736)
--(axis cs:41,0.00776482350862794)
--(axis cs:42,0.00778705307556905)
--(axis cs:43,0.00780910131715425)
--(axis cs:44,0.0078089959731875)
--(axis cs:45,0.00780456977290596)
--(axis cs:46,0.00782493849948956)
--(axis cs:47,0.00784002106358221)
--(axis cs:48,0.0078549770629937)
--(axis cs:49,0.00786450790163581)
--(axis cs:49,0.0134169192065161)
--(axis cs:49,0.0134169192065161)
--(axis cs:48,0.0135340584843531)
--(axis cs:47,0.0135720289803862)
--(axis cs:46,0.0136394574182699)
--(axis cs:45,0.0136887726849127)
--(axis cs:44,0.0138122657025383)
--(axis cs:43,0.0138675485836706)
--(axis cs:42,0.0139918271636059)
--(axis cs:41,0.0140453951630227)
--(axis cs:40,0.0141512506755435)
--(axis cs:39,0.0142291307921443)
--(axis cs:38,0.0142858227704734)
--(axis cs:37,0.0143139236149889)
--(axis cs:36,0.014639909619057)
--(axis cs:35,0.0147751106185576)
--(axis cs:34,0.0148725665336551)
--(axis cs:33,0.0151417728477299)
--(axis cs:32,0.0153924643930601)
--(axis cs:31,0.0154382135179885)
--(axis cs:30,0.0159577109300667)
--(axis cs:29,0.0162947279986688)
--(axis cs:28,0.0165097829234138)
--(axis cs:27,0.0166819345055086)
--(axis cs:26,0.0167912989501017)
--(axis cs:25,0.0169371729334366)
--(axis cs:24,0.0170312759361976)
--(axis cs:23,0.0170935884095524)
--(axis cs:22,0.0172803235099589)
--(axis cs:21,0.0176555693798441)
--(axis cs:20,0.0179020898838248)
--(axis cs:19,0.0178944896493512)
--(axis cs:18,0.0182790642667772)
--(axis cs:17,0.0189714396517311)
--(axis cs:16,0.0193397439805964)
--(axis cs:15,0.0193524378807324)
--(axis cs:14,0.0191950132430813)
--(axis cs:13,0.0195858694761894)
--(axis cs:12,0.0205643751582679)
--(axis cs:11,0.022290496471129)
--(axis cs:10,0.0229395067300609)
--(axis cs:9,0.0253168332619417)
--(axis cs:8,0.0276256909650156)
--(axis cs:7,0.0313632007291823)
--(axis cs:6,0.04111991567739)
--(axis cs:5,0.0710843293407611)
--(axis cs:4,3.8086393629018)
--(axis cs:3,1327.23470690327)
--(axis cs:2,90793.184069323)
--(axis cs:1,16158506.4636865)
--cycle;

\addplot [thick, steelblue31119180, mark=*, mark size=2, mark options={solid}]
table {%
1 0.489498417923144
2 0.0102394554840501
3 0.00174027594138102
4 0.000162645029798216
5 5.86643191277299e-05
6 6.84335436001536e-05
7 6.85165125311027e-05
8 2.86386291753331e-05
9 3.18197455047914e-05
10 3.50003527583169e-05
11 3.81811304124129e-05
12 4.13616857055011e-05
13 3.53046214462984e-05
14 3.78013671493614e-05
15 3.20308706003202e-05
16 3.40324234304755e-05
17 3.60336749728878e-05
18 3.80349460716103e-05
19 4.00363769749038e-05
20 4.20377639101027e-05
21 4.4038728655711e-05
22 4.60392085532453e-05
23 4.80396431774916e-05
24 5.00407480440461e-05
25 5.20417169210119e-05
26 5.40418619084198e-05
27 5.6042175045079e-05
28 5.80420804297485e-05
29 6.00427939704534e-05
30 6.20432357421482e-05
31 6.40439354708594e-05
32 6.60442807537085e-05
33 6.80448837466712e-05
34 7.00449232061609e-05
35 7.2045257017453e-05
36 7.404541556386e-05
37 7.60460215183152e-05
38 7.80454657680375e-05
39 8.00448950980314e-05
40 8.20449590736647e-05
41 8.40447228678793e-05
42 8.60444470295937e-05
43 8.80437734189967e-05
44 9.00417815737282e-05
45 9.20413040597376e-05
46 9.40403487095779e-05
47 9.60388930843702e-05
48 9.80368516082242e-05
49 0.00010003581971905
};
\addlegendentry{Frank-Wolfe}
\addplot [thick, darkorange25512714, mark=*, mark size=2, mark options={solid}]
table {%
1 91139.3182027116
2 504.342848190118
3 0.940063964755766
4 0.0810756512743676
5 0.0572711429349562
6 0.0503103466890939
7 0.0516780719995258
8 0.0511284013268333
9 0.04966184873803
10 0.0522611796538809
11 0.0535057303043982
12 0.0548782945246797
13 0.0553139753646919
14 0.0563429351570803
15 0.0569610430865981
16 0.0584642445847924
17 0.060805958975634
18 0.062553354217457
19 0.0641671683306322
20 0.0638882409398346
21 0.0643772270568577
22 0.065628229439193
23 0.0664699100442295
24 0.0671800871802012
25 0.0677509793732046
26 0.0691616139681925
27 0.0703859111008283
28 0.0714731360431399
29 0.0725657441728706
30 0.0736741763211714
31 0.0747731791180934
32 0.0759741970031031
33 0.0767902569190758
34 0.0772809171775338
35 0.0784690273628909
36 0.0797874714618269
37 0.0803143788674175
38 0.0812506327646622
39 0.0817796510048101
40 0.082961662691046
41 0.0835545614363063
42 0.0841632307941393
43 0.0848229118725173
44 0.0848226006877772
45 0.085276090086661
46 0.08612863743687
47 0.0857874513106715
48 0.0864908104455663
49 0.087456949642861
};
\addlegendentry{Certainty equivalence}
\addplot [thick, forestgreen4416044, mark=*, mark size=2, mark options={solid}]
table {%
1 248027.763500307
2 15710.0000612975
3 249.352431885332
4 12.5473269192017
5 0.483709260033117
6 0.149165661111197
7 0.094065548833188
8 0.0712036369094902
9 0.0650979747970566
10 0.0621720300987082
11 0.057055083294357
12 0.0533751578206665
13 0.0523662739977445
14 0.0514899085256561
15 0.0512836740715739
16 0.049833822830515
17 0.0483114284577059
18 0.0466427801556186
19 0.0462348190327079
20 0.044856478098267
21 0.0444434346146472
22 0.0435690790323481
23 0.0433509687520781
24 0.0428670650043319
25 0.0424948275061552
26 0.0418959531012027
27 0.0411375005740401
28 0.0415185699461903
29 0.0414961306225763
30 0.0418162862678646
31 0.0408377280450315
32 0.0405173808459551
33 0.0405927499807791
34 0.040768713087261
35 0.0408359952704043
36 0.0408263304373765
37 0.0406754693422807
38 0.0404057098374615
39 0.0403411008638552
40 0.0401797726737233
41 0.0400053891490649
42 0.0395501959284709
43 0.0398193183036665
44 0.0393595538455094
45 0.0392190272921455
46 0.0388858203076156
47 0.0390375444211593
48 0.0386784819912684
49 0.038618397408524
};
\addlegendentry{Naive exploration}
\addplot [thick, crimson2143940, mark=*, mark size=2, mark options={solid}]
table {%
1 31942.0951993137
2 174.033252401103
3 0.490546251718058
4 0.0297073379322818
5 0.0208587443557355
6 0.016175531709491
7 0.0148920636706311
8 0.0140020232486429
9 0.0130689527351444
10 0.0122579985247109
11 0.0117918886106232
12 0.0114778079493524
13 0.0112463597005767
14 0.0110083732198743
15 0.0109227193846794
16 0.0107627963350905
17 0.0106352230307643
18 0.0105833197831494
19 0.0105123745827387
20 0.0105238970246667
21 0.0104223359894061
22 0.0102749242687582
23 0.0101114280595703
24 0.0100250158454592
25 0.00993574762764548
26 0.00984334055256201
27 0.00977029433475941
28 0.00973671524162207
29 0.00971417143782622
30 0.0096537882200654
31 0.00965496618010839
32 0.00959524038908329
33 0.00959119796321218
34 0.0095712324286441
35 0.00954436836283207
36 0.00951756275061393
37 0.00946337793066732
38 0.00944235601375368
39 0.00940524056080017
40 0.0093233926793398
41 0.00929077489385005
42 0.00925599193829537
43 0.00923060712322672
44 0.00919349486264404
45 0.00914571964637964
46 0.00914697692156044
47 0.00913865215613425
48 0.00912506431557471
49 0.00911238810834967
};
\addlegendentry{Frequency-based}
\end{axis}

\end{tikzpicture}

%% file: refs.bib
@inproceedings{wagenmaker2020active,
	title={Active learning for identification of linear dynamical systems},
	author={Wagenmaker, Andrew and Jamieson, Kevin},
	booktitle={Proc. Conf. Learn. Theory (COLT)},
	pages={3487--3582},
	year={2020},
	organization={PMLR}
}

@article{frank1956algorithm,
	title={An algorithm for quadratic programming},
	author={Frank, Marguerite and Wolfe, Philip},
	journal={Nav. Res. Logist. Q.},
	volume={3},
	number={1-2},
	pages={95--110},
	year={1956},
	publisher={Wiley Online Library}
}

@inproceedings{jaggi2013revisiting,
	title     = {Revisiting {F}rank-{W}olfe: Projection-Free Sparse Convex Optimization},
	author    = {Jaggi, Martin},
	booktitle = {Proc. 30th Int. Conf. Mach. Learn. (ICML)},
	series    = {Proceedings of Machine Learning Research},
	volume    = {28},
	number    = {1},
	pages     = {427--435},
	year      = {2013},
	publisher = {PMLR}
}

@inproceedings{wahlberg2010optimal,
	title={On optimal input design in system identification for control},
	author={Wahlberg, Bo and Hjalmarsson, H{\aa}kan and Annergren, Mariette},
	booktitle={Proc. 49th IEEE Conf. Decis. Control (CDC)},
	pages={5548--5553},
	year={2010},
	organization={IEEE}
}

@inproceedings{simchowitz2020naive,
	title={Naive exploration is optimal for online lqr},
	author={Simchowitz, Max and Foster, Dylan},
	booktitle={Proc. Int. Conf. Mach. Learn. (ICML)},
	pages={8937--8948},
	year={2020},
	organization={PMLR}
}

@article{wittenmark1995adaptive,
	title={Adaptive dual control methods: An overview},
	author={Wittenmark, Bj{\"o}rn},
	journal={Adaptive Syst. Control Signal Process.},
	pages={67--72},
	year={1995},
	publisher={Elsevier}
}

@book{aastrom2013adaptive,
	title={Adaptive control},
	author={{\AA}str{\"o}m, Karl J and Wittenmark, Bj{\"o}rn},
	year={2013},
	publisher={Courier Corporation}
}

@article{rosdahl2022dual,
	title={Dual Control by Reinforcement Learning Using Deep Hyperstate Transition Models},
	author={Rosdahl, Christian and Cervin, Anton and Bernhardsson, Bo},
	journal={IFAC-PapersOnLine},
	volume={55},
	number={12},
	pages={395--401},
	year={2022},
	publisher={Elsevier}
}

@article{wagenmaker2024optimal,
	title={Optimal exploration for model-based rl in nonlinear systems},
	author={Wagenmaker, Andrew and Shi, Guanya and Jamieson, Kevin G},
	journal={Adv. Neural Inf. Process. Syst.},
	volume={36},
	year={2024}
}

@inproceedings{mutny2023active,
	title={Active exploration via experiment design in markov chains},
	author={Mutny, Mojmir and Janik, Tadeusz and Krause, Andreas},
	booktitle={Proc. Int. Conf. Artif. Intell. Stat. (AISTATS)},
	pages={7349--7374},
	year={2023},
	organization={PMLR}
}

@inproceedings{wagenmaker2021task,
	title={Task-optimal exploration in linear dynamical systems},
	author={Wagenmaker, Andrew J and Simchowitz, Max and Jamieson, Kevin},
	booktitle={Proc. Int. Conf. Mach. Learn. (ICML)},
	pages={10641--10652},
	year={2021},
	organization={PMLR}
}

@article{bartos2026optimistic,
	title={Optimistic Online {LQR} via Intrinsic Rewards},
	author={Bartos, Marcell and Lee, Bruce D and Treven, Lenart and Krause, Andreas and D{\"o}rfler, Florian and Zeilinger, Melanie N},
	journal={arXiv preprint arXiv:2603.28938},
	year={2026}
}
